\documentclass[12pt]{amsart}

\usepackage[utf8]{inputenc}
\usepackage[T1]{fontenc}
\usepackage{amsmath,amssymb,amsthm,mathtools}
\usepackage[margin=1.1in]{geometry}
\usepackage[dvipsnames]{xcolor}
\usepackage{tikz-cd}
\definecolor{bodylinkblue}{RGB}{0,64,255}
\definecolor{toclinkred}{RGB}{255,0,0}
\definecolor{tocsubsectionred}{RGB}{255,0,0}
\usepackage[colorlinks=true,linkcolor=bodylinkblue,citecolor=bodylinkblue,urlcolor=bodylinkblue]{hyperref}

\numberwithin{equation}{section}
\allowdisplaybreaks

\newtheorem{theorem}{Theorem}[section]
\newtheorem{proposition}[theorem]{Proposition}
\newtheorem{lemma}[theorem]{Lemma}
\newtheorem{corollary}[theorem]{Corollary}

\newtheorem{problem}[theorem]{Problem}

\newtheorem{statement}[theorem]{Statement}
\theoremstyle{definition}
\newtheorem{definition}[theorem]{Definition}
\newtheorem{example}[theorem]{Example}
\theoremstyle{remark}
\newtheorem{remark}[theorem]{Remark}

\makeatletter
\def\l@section{\@tocline{1}{0pt}{1pc}{}{\color{toclinkred}}}
\def\l@subsection{\@tocline{2}{0pt}{1pc}{5pc}{\color{tocsubsectionred}}}
\def\l@subsubsection{\@tocline{3}{0pt}{1pc}{7pc}{\color{tocsubsectionred}}}
\renewcommand{\tocsection}[3]{%
  \textcolor{toclinkred}{\indentlabel{\@ifnotempty{#2}{\ignorespaces#1 #2.\quad}}#3}}
\renewcommand{\tocsubsection}[3]{%
  \textcolor{tocsubsectionred}{\indentlabel{\@ifnotempty{#2}{\ignorespaces#1 #2.\quad}}#3}}
\renewcommand{\tocsubsubsection}[3]{%
  \textcolor{tocsubsectionred}{\indentlabel{\@ifnotempty{#2}{\ignorespaces#1 #2.\quad}}#3}}
\makeatother

\newcommand{\im}{\sqrt{-1}}
\newcommand{\C}{\mathbb C}
\newcommand{\R}{\mathbb R}

\newcommand{\mT}{\mathbb T}
\newcommand{\CP}{\mathbb P}

\newcommand{\mO}{\mathcal O}
\newcommand{\w}{\omega}
\newcommand{\Ric}{\operatorname{Ric}}
\newcommand{\HSC}{\operatorname{HSC}}

\newcommand{\sff}{\mathrm{II}}
\newcommand{\td}{\operatorname{td}}

\newcommand{\Span}{\operatorname{Span}}
\newcommand{\tf}{\mathrm{tf}}

\newcommand{\Z}{\mathbb Z}
\newcommand{\Bl}{\operatorname{Bl}}
\newcommand{\Aut}{\operatorname{Aut}}
\newcommand{\Hor}{\mathcal H}
\newcommand{\Ver}{\mathcal V}
\newcommand{\calY}{\mathcal Y}
\newcommand{\calZ}{\mathcal Z}
\newcommand{\calX}{\mathcal X}
\newcommand{\calC}{\mathcal C}
\newcommand{\frakM}{\mathfrak M}

\makeatletter
\def\@settitle{\begin{center}%
  \baselineskip14\p@\relax
  \bfseries
  \uppercasenonmath\@title
  \@title
  \end{center}%
}
\def\@setauthors{%
  \begingroup
  \def\thanks{\protect\thanks@warning}%
  \trivlist
  \centering\footnotesize \@topsep30\p@\relax
  \advance\@topsep by -\baselineskip
  \item\relax
  \author@andify\authors
  \def\\{\protect\linebreak}%
  \MakeUppercase{\authors}%
  \ifx\@empty\contribs
  \else
    ,\penalty-3 \space \@setcontribs
    \@closetoccontribs
  \fi
  \endtrivlist
  \endgroup
}
\def\@setabstracta{%
  \ifvoid\abstractbox
  \else
    \skip@38\p@ \advance\skip@-\lastskip
    \advance\skip@-\baselineskip \vskip\skip@
    \box\abstractbox
    \prevdepth\z@
  \fi
}
\makeatother

\title{Positive Holomorphic Sectional Curvature on Rational Surfaces}

\author{Shiyu Zhang}
\address{University of Science and Technology of China, Hefei, 230026, China}
\email{shiyu123@mail.ustc.edu.cn}
\date{\today}
\subjclass[2020]{Primary 32Q15; Secondary 14J26, 14M25, 53C55}
\keywords{Positive holomorphic sectional curvature, K\"ahler metrics, rational surfaces, toric manifolds}

\begin{document}

\begin{abstract}

In 1975, Hitchin proved that any compact complex surface admitting a K\"ahler
metric with positive holomorphic sectional curvature $\HSC>0$ is rational.
Conversely, he constructed such metrics on all Hirzebruch surfaces
$\mathbb F_k$, as a first step toward characterizing rational surfaces by the
existence of a K\"ahler metric with suitable curvature positivity.

In this paper, we prove that every projective manifold $X$ obtained from a
projective toric manifold by a finite sequence of blow-ups at points admits a
K\"ahler metric with $\HSC>0$. This statement applies to all rational surfaces
and therefore completes Hitchin's result, thus resolving the surface
case of a problem of Yau listed in \emph{Open Problems in Geometry}.

The proof has two main ingredients. First, we prove that the toric
K\"ahler metric on a projective toric manifold arising from Delzant's construction has \(\HSC>0\). Second,
via a one-parameter degeneration, we construct, for any such
\(X\), a smooth projective family \(\pi:\mathcal X\to\C\) such that
\(\mathcal X_t\simeq X\) for \(t\ne0\), while \(\mathcal X_0\) is a
projective toric manifold.

\end{abstract}

\maketitle
\markboth{SHIYU ZHANG}{ POSITIVE HOLOMORPHIC SECTIONAL CURVATURE ON RATIONAL SURFACES}

\begingroup
\hypersetup{linkcolor=toclinkred}
\tableofcontents
\endgroup

\section{Introduction}

Let \((X,J,\w)\) be a K\"ahler manifold. For any nonzero tangent vector
\(V\in T_pX\), the holomorphic sectional curvature of the \(J\)-invariant
real two-plane \(\pi=\Span\{V,JV\}\) is defined by
\[
  \HSC(\pi)=\frac{R(V,JV,JV,V)}{\|V\|^4},
\]
where $R$ denotes the Riemannian curvature tensor of the metric $g(\cdot,\cdot)=\w(\cdot,J\cdot)$.

In 1975, Hitchin \cite{Hitchin1975} proved that a compact K\"ahler surface with positive holomorphic sectional curvature (abbreviated as $\HSC>0$) is rational, i.e., birationally equivalent to $\CP^2$. He then asked: ``\emph{With what sort of positivity of curvature can we attempt to characterize rational surfaces?}'' As a first step, he constructed K\"ahler metrics with $\HSC>0$ on all Hirzebruch surfaces $\mathbb F_{k}$.

A stronger curvature condition is positive holomorphic bisectional curvature \(\mathrm{HBSC}>0\). In foundational work, Mori and Siu-Yau \cite{Mori1979,SiuYau1980} independently established that any compact K\"ahler manifold with $\mathrm{HBSC}>0$ must be $\CP^n$. Motivated by these results and Hitchin's construction, Yau asked \emph{whether positivity of holomorphic sectional curvature can be used to characterize the rationality of projective manifolds} (cf. \cite{Yau1991}). Specifically, Yau posed the following question in his problem list \emph{Open Problems in Geometry}.

\begin{problem}[{\cite[Problem 67]{Yau1993}}]\label{prob:Yau}
  Let $X$ be a compact K\"ahler manifold with $\HSC>0$, is it
unirational? Is it projective? 

If a projective manifold is obtained by blowing
up a projective manifold with $\HSC>0$ along
a subvariety, does it still carry a metric with $\HSC>0$?
\end{problem}

\begin{remark}
  Blow-ups are among the basic operations in birational geometry. For instance,
  every smooth rational surface is obtained from \(\mathbb P^2\) or from a
  Hirzebruch surface \(\mathbb F_{k}\) by a finite sequence of blow-ups at
  points. In the surface case, the second part of
  Problem~\ref{prob:Yau} is equivalent to asking \emph{whether every rational
  surface admits a K\"ahler metric with $\HSC>0$}.
\end{remark}

Significant progress has been made on the first part of Problem~\ref{prob:Yau}. In complex dimension~$2$, this was showed by Hitchin. In higher dimensions, building on the existence of MRC fibrations~\cite{Cam92,KMM} and the uniruledness criterion~\cite{BDPP}, Heier-Wong~\cite{HeierWong2020} and Yang~\cite{Yang2018} proved that $\HSC>0$ implies rational connectedness in the projective and K\"ahler cases, respectively. Along these lines, Matsumura~\cite{Matsumura2020MRC,Matsumura2022} established structure theorems for projective manifolds with $\HSC\geq0$ in terms of MRC fibrations, which were subsequently extended to the K\"ahler case~\cite{ZZ2023,matsumura2025}; we refer the reader to Matsumura's survey~\cite{matsumurasurvey} for a systematic introduction. We also refer to~\cite{Ni2021,NiZheng2022,ZhangZhang2026} and the references therein for other interesting curvature conditions ensuring rational connectedness.

The main motivation for this paper stems from the second part of Problem~\ref{prob:Yau}, namely, the problem of finding K\"ahler metrics with \(\HSC>0\) on blow-ups of projective manifolds with \(\HSC>0\). Several extensions of Hitchin's construction have been obtained. Alvarez--Heier--Zheng \cite{AlvarezHeierZheng2018} proved that the projectivization of a holomorphic vector bundle over a compact K\"ahler manifold with \(\HSC>0\) again admits a K\"ahler metric with \(\HSC>0\). Furthermore, the class of K\"ahler metrics with \(\HSC>0\) on Hirzebruch manifolds
\[
\mathbb F_{n,k}
  :=
  \mathbb P
  \bigl(\mathcal O_{\mathbb P^n}
  \oplus \mathcal O_{\mathbb P^n}(k)\bigr)
\]
has been studied by Yang--Zheng \cite{YanZhe19}. However, even in complex dimension two, no examples with \(\HSC>0\) beyond \(\CP^2\) and the Hirzebruch surfaces constructed by Hitchin had previously appeared in the literature. For instance, prior to the present work, it remained unknown whether the blow-up of a Hirzebruch surface at one point admits a K\"ahler metric with \(\HSC>0\) (cf. \cite[Problem~3.8]{MatsumuraOpenProblems2022}).

\subsection{Main results}

The main result of this paper is the following theorem.

\begin{theorem}\label{theorem-blowup-HSC}
Let \(X\) be a projective manifold obtained from a projective toric manifold by
a finite sequence of blow-ups at points. Then \(X\) admits a K\"ahler metric with
\(\HSC>0\).
\end{theorem}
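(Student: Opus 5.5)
The proof combines the two ingredients announced in the abstract with the openness of the condition \(\HSC>0\) in smooth families. The first step is the curvature statement for toric manifolds. Let \(Y\) be a projective toric manifold with moment polytope \(\Delta\subset\R^n\), written as
\[
\Delta=\{x:\langle x,u_i\rangle\ge\lambda_i,\ i=1,\dots,d\}.
\]
Delzant's construction realizes \(Y\) as the Kähler quotient \(\C^d/\!\!/_\lambda K\), where \(K\subset\mT^d\) is the kernel of the map \(\mT^d\to\mT^n\) determined by the normals \(u_i\). We equip \(Y\) with the induced (Guillemin) metric. For a Kähler reduction, the O'Neill-type formula for Riemannian submersions gives, for a horizontal lift \(\tilde V\) of \(V\),
\[
\HSC_Y(V)=\HSC_{\C^d}(\tilde V)+\frac{c\,\|A_{\tilde V}J\tilde V\|^2}{\|V\|^4}
\]
with \(c>0\). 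The first term vanishes because the ambient metric on \(\C^d\) is flat. So we need to show that the O'Neill term \(A_{\tilde V}J\tilde V\) is nonzero for every nonzero horizontal \(\tilde V\) at points of the level set \(\mu^{-1}(\lambda)\). This term is the vertical projection of \(\nabla_{\tilde V}(J\tilde V)\), and it is governed by the quadratic map
\[
\tilde V\mapsto \bigl(\langle a_k,(|v_1|^2,\dots,|v_d|^2)\rangle\bigr)_{k},
\]
where the \(a_k\) span \(\mathfrak k\subset\R^d\). Positivity therefore reduces to a linear-algebra statement: no nonzero horizontal vector has \(|v_i|^2\)-profile orthogonal to \(\mathfrak k\). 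The smoothness/Delzant condition enters here, and we check it chart by chart near vertices of \(\Delta\) using the standard affine coordinates. I expect this linear-algebra verification, including the non-compact directions near the boundary strata, to be the main obstacle. If the strict inequality fails in some degenerate direction, the first thing I would try is a small perturbation of \(\lambda\) inside the Kähler cone.

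The second step is the degeneration. Suppose \(X\) is obtained from a toric \(Y\) by blowing up points \(p_1,\dots,p_m\), possibly infinitely near. Using the torus action \(\mT^n_\C\) on \(Y\), choose a one-parameter subgroup \(\sigma(t)\) and move the centers by \(p_j(t)=\sigma(t)\cdot p_j\) or along suitable curves, so that as \(t\to0\) every center converges to a torus-fixed point. We make this inductive over the blow-up sequence: at each stage the current total space is toric at \(t=0\), and the next center specializes to a fixed point of the new toric structure. Blowing up the total space \(Y\times\C\) along the resulting sections \(s_j:\C\to Y\times\C\) (successively, since the sections are disjoint or nested over \(\C\)) yields a smooth projective family \(\pi:\calX\to\C\). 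For \(t\neq0\) we get \(\calX_t\simeq X\), because all \(\sigma(t)\) are automorphisms, and \(\calX_0\) is an iterated blow-up of a toric manifold at torus-fixed points, hence toric and projective. The subtle point is to keep the specialized centers distinct or correctly infinitely near, so that the fibre blow-ups commute with the blow-up of the total space and flatness holds. This requires choosing \(\sigma\) generic with respect to the finitely many relevant weights.

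The final step is to conclude. By the first step, \(\calX_0\) carries a Kähler metric \(\w_0\) with \(\HSC>0\). Since \(\calX\) is projective, it has a relative Kähler form \(\Omega\). By Ehresmann's theorem, after a \(C^\infty\) trivialization \(\calX|_{|t|<\epsilon}\cong \calX_0\times D_\epsilon\), we can deform \(\w_0\) to Kähler forms \(\w_t\) on \(\calX_t\) that depend smoothly on \(t\). Concretely, we extend \(\w_0\) to a closed real \((1,1)\)-form on a neighbourhood of \(\calX_0\): on a toric fibre we may take \(\w_0\) in an integral class and pull back a Fubini–Study-type form from a relative embedding, correcting by \(\im\partial\bar\partial\) of a smooth function, or we use Kodaira–Spencer stability of Kähler metrics. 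Holomorphic sectional curvature is continuous in the \(C^2\) topology on the compact unit sphere bundle, so \(\HSC(\w_t)>0\) for small \(t\neq0\). This gives the desired metric on \(X\simeq\calX_t\).
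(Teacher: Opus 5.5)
Your architecture matches the paper's: Kähler reduction plus O'Neill for the Delzant metric, degeneration to a toric blow-up via a generic cocharacter, then Kodaira--Spencer stability and openness. However, Step~1 has a genuine gap exactly at the strict inequality, and Step~3 cannot work without strictness. You correctly reduce $\HSC>0$ to showing that $Q_{\widetilde V}:=\iota^*(|v_1|^2,\dots,|v_d|^2)\neq0$ (restriction to $\mathfrak k$) for every nonzero horizontal $\widetilde V$. But you leave this open, expect a chart-by-chart check near vertices using the Delzant condition, and hedge with a perturbation of $\lambda$. The missing idea is compactness of the level set, equivalently boundedness of $\Delta$. The paper's argument is as follows. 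Since $\widetilde V\in T_zZ$, one has $d\mu_z(\widetilde V)=0$, and $\mu$ is quadratic, so $\mu(z+t\widetilde V)=t^2Q_{\widetilde V}$. If $Q_{\widetilde V}=0$, the whole real line $z+t\widetilde V$ lies in the compact set $Z$, which is absurd. In your torus notation it is a one-liner. The condition $(|v_i|^2)_i\perp\mathfrak k$ means $|v_i|^2=\langle y,u_i\rangle$ for some $y\in\R^n$. Then $\langle y,u_i\rangle\ge0$ for all $i$, so $\Delta+\R_{\ge0}\,y\subset\Delta$. Boundedness forces $y=0$, hence $v=0$. No charts, no perturbation, and no use of the Delzant condition (which only ensures freeness) are needed.

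Two further points need repair. First, your O'Neill formula is misapplied. The Riemannian submersion is $Z\to Z/K$, not $\C^d\to M$, and $Z$ is not flat. So the first term is the curvature of $Z$, which by the Gauss equation is $\langle\sff(\widetilde V,\widetilde V),\sff(J\widetilde V,J\widetilde V)\rangle-\|\sff(\widetilde V,J\widetilde V)\|^2$. You must check, using that each $A_\xi$ commutes with $J_0$ and is skew, that $\sff(J\widetilde V,J\widetilde V)=\sff(\widetilde V,\widetilde V)$ and $\sff(\widetilde V,J\widetilde V)=0$. Then this term equals $\|\sff(\widetilde V,\widetilde V)\|^2$, a positive multiple of $\|Q_{\widetilde V}\|^2$ rather than zero, and only then is your lower bound justified.

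Second, your hands-on degeneration in Step~2 is a legitimate substitute for the paper's Kleiman family: pulling back $\pi_k$ along the extended orbit $t\mapsto\lambda(t)c$ is exactly an iterated blow-up along sections. But the sections $\sigma(t)p_j$ in $Y\times\C$ typically collide at $t=0$ (two general points of $\CP^2$ flow to the same fixed point). No choice of $\sigma$ prevents this, so they are not ``disjoint or nested''. What works is to blow up successively, each time along the closure, in the current total space, of the orbit of the next centre under the lift of $s\cdot(y,t)=(\sigma(s)y,st)$. By properness this closure is a section, so the blow-up is again smooth with point-blow-up fibres. Its value at $t=0$ is $\sigma$-fixed, hence $G$-fixed once $\sigma$ avoids the weight hyperplanes of the finitely many toric blow-ups that can occur.
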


This provides a large and flexible supply of examples with $\HSC>0$,
in view of the abundance of smooth projective toric manifolds; see, for instance,
\cite{CLS11}. For example, projective spaces $\mathbb P^n$ and Hirzebruch
manifolds are smooth projective toric manifolds; see Example~\ref{ex:Hirzebruch}. Recall that every smooth rational surface is obtained from
$\mathbb P^2$ or from a Hirzebruch surface by a finite sequence of blow-ups at
points, Theorem~\ref{theorem-blowup-HSC} yields K\"ahler metrics with $\HSC>0$ on all
rational surfaces. Together with Hitchin's result, this settle the aforementioned questions of Hitchin and Yau for rational surfaces.

\begin{theorem}\label{theorem-rational-HSC}
A compact complex surface \(X\) is rational if and only if it admits a K\"ahler
metric with \(\HSC>0\).
\end{theorem}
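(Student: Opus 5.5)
The statement has two directions; I would treat them separately.

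The \emph{only if} direction is immediate from Theorem~\ref{theorem-blowup-HSC}. Let \(X\) be a rational compact complex surface. Rational surfaces are projective, and every smooth rational surface is obtained from \(\CP^2\) or from a Hirzebruch surface \(\mathbb F_k\) by a finite sequence of point blow-ups; this is the classification recalled in the remark after Problem~\ref{prob:Yau}. Both \(\CP^2\) and \(\mathbb F_k\) are smooth projective toric surfaces (Example~\ref{ex:Hirzebruch}). Theorem~\ref{theorem-blowup-HSC} therefore applies directly and gives a K\"ahler metric with \(\HSC>0\) on \(X\). The only thing to check is that "rational" here means a smooth compact surface bimeromorphic to \(\CP^2\). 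Such a surface is projective: it has \(h^{2,0}=0\) and even \(b_1\), so it is K\"ahler and hence projective. Its minimal model is then \(\CP^2\) or some \(\mathbb F_k\) with \(k\neq1\), by Castelnuovo--Enriques and the classification of minimal rational surfaces.

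For the \emph{if} direction I would invoke Hitchin's theorem \cite{Hitchin1975}. If I had to reconstruct its proof, the outline would be as follows. Let \(X\) be a compact K\"ahler surface with \(\HSC>0\). First, I would show \(H^0(X,K_X^{\otimes m})=0\) for all \(m\ge1\). Given a nonzero section \(s\), one has \(\sqrt{-1}\partial\bar\partial\log|s|^2\) equal to the Ricci form away from the zeros of \(s\). At a maximum point of \(|s|^2\), restrict to a complex line in the direction of a unit vector \(V\) and average \(\HSC\) over the unit sphere. This shows that the scalar curvature is a positive multiple of the average of \(\HSC\), which contradicts the maximum principle.

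More precisely, this averaging gives \(\int_X s_g\,\w^2>0\), so the total scalar curvature is positive. That in turn gives \(\int K_X\cdot[\w]<0\), and vanishing of all plurigenera follows from this inequality alone. Hence the Kodaira dimension is \(-\infty\). By the Enriques--Kodaira classification, \(X\) is then rational or ruled over a curve \(C\) of genus \(g\ge1\).

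To exclude the ruled case with \(g\ge1\), I would use Hitchin's second step. For a holomorphic \(1\)-form \(\alpha\), the Bochner formula gives \(\Delta|\alpha|^2 \ge |\nabla\alpha|^2 + \Ric(\alpha^\sharp,\overline{\alpha^\sharp})\), which needs positive Ricci curvature, not just \(\HSC>0\). Hitchin instead restricts \(\alpha\) to the fibres and uses rational curves: I would consider the second fundamental form argument on the curves \(\mathbb P^1\) in the ruling combined with the maximum of \(|\alpha|^2\) along a suitable direction. Under \(\HSC>0\), \(\partial\bar\partial|\alpha|^2\) evaluated in the direction \(V\) with \(\alpha(V)\neq0\) at a maximum gives a contradiction, so \(h^{1,0}=0\), whence \(g=0\) and \(X\) is rational.

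The main obstacle in proving the equivalence is entirely inside Theorem~\ref{theorem-blowup-HSC}, which is taken as given here. Among the remaining steps, the delicate one is the \(1\)-form vanishing under only \(\HSC>0\). To control \(\partial\bar\partial|\alpha|^2\) in the single direction \(V\), I would first choose \(V\) at a maximum point of \(|\alpha|^2\) so that \(V\) is dual to \(\alpha\). Since this step is exactly Hitchin's theorem, I would cite it.
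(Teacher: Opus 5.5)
Your proof is correct and essentially matches the paper's: the paper also reduces ``rational $\Rightarrow$ $\HSC>0$'' to Theorem~\ref{theorem-blowup-HSC}, using that every rational surface is an iterated point blow-up of $\CP^2$ or some $\mathbb F_k$ (both toric), and it simply cites Hitchin for the converse. Your reconstruction of Hitchin's argument is not needed, and the aside about restricting $\alpha$ to the fibres of the ruling is a red herring; the step you end with, taking $V$ dual to $\alpha$ at a maximum of $|\alpha|^2$ so that $\partial_V\partial_{\bar V}|\alpha|^2\ge R(V,\bar V,V,\bar V)>0$, is the correct one.
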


In other words, Problem~\ref{prob:Yau} is completely resolved in complex
dimension \(2\). Furthermore, combining this with Matsumura's structure theorem
for \(\HSC\geq0\), we obtain the following classification.

\begin{corollary}\label{cor:nonnegative-hsc-surfaces}
Let \(X\) be a compact K\"ahler surface. Then the following are equivalent:
\begin{enumerate}
    \item \(X\) admits a K\"ahler metric \(\omega\) with
    \(
        \HSC_\omega \geq 0.
    \)
    \item \(X\) is one of the following:
    \begin{enumerate}
        \item a rational surface;
        \item a finite \'etale quotient of a two-dimensional complex torus;
        \item a projectively unitary flat \(\mathbb P^1\)-bundle over an elliptic curve.
    \end{enumerate}
\end{enumerate}
\end{corollary}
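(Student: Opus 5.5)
The plan is to prove the two implications separately. For (1)\(\Rightarrow\)(2) I will invoke the structure theorems for \(\HSC\geq 0\): Matsumura~\cite{Matsumura2020MRC,Matsumura2022} in the projective case and its K\"ahler extension~\cite{ZZ2023,matsumura2025}. For (2)\(\Rightarrow\)(1) I will construct a metric in each of the three cases, using Theorem~\ref{theorem-rational-HSC} for rational surfaces.

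\emph{(1)\(\Rightarrow\)(2).} Let \(\w\) be a K\"ahler metric with \(\HSC_\w\ge 0\). I will quote the structure theorem in the following form: after possibly passing to a finite \'etale cover, the MRC fibration of \(X\) is a smooth morphism \(\phi\colon X\to Y\). Here \(Y\) carries a flat K\"ahler metric, and \(\phi\) is locally trivial, with fibres rationally connected and with projectively flat (unitary) transition data. I then split into cases according to \(\dim Y\in\{0,1,2\}\).
\begin{itemize}
\item If \(\dim Y=0\), then \(X\) is rationally connected. A rationally connected surface is rational, so we are in case (a).
\item If \(\dim Y=2\), then \(X\) is not uniruled, and the theorem makes \(X\) a finite \'etale quotient of a flat manifold, i.e.\ of a \(2\)-torus. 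This is case (b).
\item If \(\dim Y=1\), then \(Y\) is a flat curve, hence an elliptic curve. The map \(\phi\) is then a locally trivial \(\mathbb P^1\)-bundle whose structure group reduces to \(PU(2)\) acting through a flat connection, which is case (c).
\end{itemize}
In the last two cases one must check that the finite \'etale cover appearing in the theorem does not alter the stated form. In case (c) the MRC fibration is canonical and already defined on \(X\), so no cover is needed. In case (b) a finite \'etale quotient is exactly what is allowed.

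\emph{(2)\(\Rightarrow\)(1).} The three cases are handled as follows.
\begin{itemize}
\item Case (a) is Theorem~\ref{theorem-rational-HSC}, which gives even \(\HSC>0\).
\item In case (b), write \(X=T/G\) with \(G\) finite acting freely. Averaging a flat metric over \(G\) gives a \(G\)-invariant flat K\"ahler metric, which descends to \(X\) with \(\HSC\equiv0\). No averaging is really needed, since translation-invariant flat metrics can be chosen invariant under the linear parts of the elements of \(G\).
\item In case (c), write \(X=(\C\times\mathbb P^1)/\pi_1(E)\), where \(\pi_1(E)\) acts by translations on \(\C\) and via a representation \(\rho\colon\pi_1(E)\to PU(2)\) on \(\mathbb P^1\). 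The product of the Euclidean metric and the Fubini--Study metric is invariant under this action, so it descends to a K\"ahler metric on \(X\).
\end{itemize}
The holomorphic sectional curvature of a product metric in the direction \(v=(a,b)\) is
\[
\frac{K_{FS}\,|b|^4}{(|a|^2+|b|^2)^2}\ge0 ,
\]
which settles case (c).

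The main obstacle is the first implication: extracting exactly the trichotomy from the structure theorem, in particular in the non-projective K\"ahler setting. There I would rely on~\cite{ZZ2023,matsumura2025} and check that non-projective examples arise only as tori quotients. The constructive direction is routine once Theorem~\ref{theorem-rational-HSC} is available.
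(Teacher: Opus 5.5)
Your route is the paper's. You apply Matsumura's structure theorem and split into cases by the dimension of the base (equivalently, of the fibre). For the converse you use Theorem~\ref{theorem-rational-HSC}, a flat metric, and the descended product metric. The cases $\dim Y\in\{0,2\}$ and all of $(2)\Rightarrow(1)$ are fine.

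\textbf{The gap is in the case $\dim Y=1$.} You write ``projectively flat (unitary) transition data'' into your paraphrase of the structure theorem, and then read off that the structure group reduces to $\operatorname{PU}(2)$. But that reduction is the whole content of this case, and the theorem, which concerns an arbitrary rationally connected fibre, does not state it in that form. As the paper uses it, the theorem gives two things:
\begin{enumerate}
\item a locally constant fibration $X\simeq(\C\times\CP^1)/\Lambda$ with monodromy $\rho\colon\Lambda\to\mathrm{PGL}_2(\C)$;
\item an isometric splitting of the universal cover.
\end{enumerate}
Flatness alone does not give (c). For example, let $F_2$ be Atiyah's non-split self-extension of $\mO_E$ on an elliptic curve $E$. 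Then $\CP(F_2)$ is a flat $\CP^1$-bundle with unipotent monodromy, but it is not projectively unitary flat, since $F_2$ and all its twists fail to be polystable.

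The missing step, which the paper supplies, runs as follows. The isometric splitting forces $\rho(\Lambda)$ to act by holomorphic isometries of $(\CP^1,g_F)$, so $\overline{\rho(\Lambda)}$ is compact. By Cartan's theorem every compact subgroup of $\mathrm{PGL}_2(\C)$ is conjugate into $\operatorname{PU}(2)$, so after conjugation $\rho(\Lambda)\subset\operatorname{PU}(2)$.

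\textbf{A smaller point concerns the finite \'etale cover.} As the paper uses it, the theorem applies to $X$ itself, because the base $Y$ is already a finite \'etale quotient of a torus, so no preliminary cover is needed. If you do pass to a cover, your remark that the MRC fibration is canonical only descends the fibration $X\to Y$, not its unitary flat structure. You would still need the isometric splitting for the universal cover of $X$ with its own metric, so that the deck group of $X$ acts by isometries of $g_F$ on the fibre.
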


\subsection{The strategy}\label{subsec:strategy}
 For the reader's convenience, we briefly outline the proof of Theorem \ref{theorem-blowup-HSC} and explain the organization of this paper. Our proof relies heavily on both the algebraic and symplectic aspects of toric manifolds. Thus, in Section \ref{sec:projective-toric-manifolds} we include a minimal description of toric manifolds needed in this paper, which may be useful to readers unfamiliar with this topic. 

In Section \ref{sec:hsc-kahler-reduction}, we view a projective toric manifold $X$ as the underlying complex manifold of the K\"ahler reduction of some flat complex space $$(M_\Delta,J_\Delta,g_\Delta,\w_\Delta):=(\C^d,J_0,g_0,\w_0)//N$$ for some Delzant polytope $\Delta$ from Delzant's construction (cf.~Section~\ref{subsec:symplectic-side}). Through direct computations using Gauss's equation and O'Neill's formula, we prove that $g_\Delta$ in fact has $\HSC>0$. This approach is motivated by Kobayashi's alternative proof (see \cite[Theorem 7.6.38]{Kobayashi1987Book}), from the Marsden--Weinstein--Meyer symplectic reduction perspective \cite{Meyer73,MW74}, of the semi-positivity of holomorphic sectional curvature of the natural \(L^2\)-K\"ahler metric arising from Atiyah-Bott's construction \cite{AtiyahBott1983} on the moduli space of stable vector bundles over a compact Riemann surface \cite{NarasimhanSeshadri1965}, a result originally proved by Itoh \cite{Itoh1988}. The strict positivity in our setting follows from a straightforward compactness argument on the moment level set; the same argument applies to general compact K\"ahler reductions of flat complex spaces (see Theorem \ref{thm:flat-quotient-curvature}).

In Section \ref{sec:toric-deformation-ordered-cluster-blowups}, we regard a projective toric manifold as a projective manifold equipped with an action of a complex torus \(G=(\C^*)^m\) with an open dense orbit. In general, blowing up a projective toric manifold at a point that is not \(G\)-fixed may yield a non-toric variety; for instance, see the degree $5$ del Pezzo surface (Example \ref{ex:del-Pezzo-surface}). Consequently, Theorem \ref{thm:flat-quotient-curvature} does not directly apply in the general setting of Theorem \ref{theorem-blowup-HSC}. We address this issue as follows. Let \(X=\Bl_{\calC}Y\), where \(Y\) is a projective toric manifold and \(\calC\) is an ordered cluster of length \(k\) on \(Y\) (see Definition \ref{def:ordered-cluster}). Our strategy is to construct a smooth projective family
\(
\pi:\calX\longrightarrow \mathbb D
\)
over a disk such that
\begin{equation}\label{eq:smooth-family}
  X_t\simeq \Bl_{\calC}Y\quad (t\neq 0),
\qquad
X_0\simeq \Bl_{\calC_0}Y .
\end{equation}
The central fibre is thus a projective toric manifold. Combining the existence of a K\"ahler metric with $\HSC>0$ on $X_0$ (see Theorem \ref{thm:guillemin-positive}) with the Kodaira--Spencer local stability theorem for K\"ahler structures \cite{KodairaSpencer60}, we obtain a K\"ahler metric with \(\HSC>0\) on the nearby fibres, and hence on \(X\). To construct the above smooth projective family
\(\pi:\calX\rightarrow \mathbb D\), we proceed in two steps:
\begin{itemize}
  \item[(1)] \emph{The universal family of ordered cluster blow-ups.} In Section \ref{subsec:universal-family-ordered-clusters}, we review Kleiman's construction \cite{Kleiman81} for iterated blow-ups, which yields a smooth projective morphism
\(
\pi_k:\calY_k\longrightarrow \calY_{k-1}
\)
where \(\calY_{k-1}\) parametrizes ordered clusters of length \(k\) on \(Y\), and the fibre over the point \(c\in \calY_{k-1}\) corresponding to \(\calC\) is naturally isomorphic to \(\Bl_{\calC}Y\) (cf.~Proposition \ref{prop:kleiman-iteration}). These identifications may be regarded as higher-dimensional analogues of Harbourne's Proposition I.2 in \cite{Harbourne88}, where the case \(Y=\CP^2\) is considered. The torus action on \(Y\) lifts functorially to all \(\calY_r\), and the \(G\)-fixed points of \(\calY_{k-1}\) correspond exactly to toric ordered clusters (see Proposition \ref{prop:fixed-clusters}).
\item[(2)] \emph{One-parameter degeneration to a toric ordered cluster.} In Section \ref{subsec:deformation-central-toric-fibre}, we choose a generic one-parameter subgroup
\(
\lambda:\C^*\longrightarrow G
\) with $(\calY_{r-1})^\lambda=(\calY_{r-1})^G$ (cf.~Proposition \ref{prop:generic-one-parameter-calY}). For the point \(c\in \calY_{k-1}\), projectivity ensures that the orbit map
\[
\C^*\longrightarrow \calY_{k-1},
\qquad
t\longmapsto \lambda(t)c
\]
extends across \(0\). Its limit \(c_0\) is \(G\)-fixed, and therefore corresponds to a toric ordered cluster \(\calC_0\). Pulling back \(\pi_k\) along this extended orbit yields the desired smooth projective family $\pi:\calX\rightarrow\C$.
\end{itemize}
In summary, we prove Theorem~\ref{theorem-blowup-HSC} by establishing the following two statements:
\begin{enumerate}
  \item Theorem~\ref{thm:guillemin-positive}: the canonical toric K\"ahler metric on a symplectic toric manifold has \(\HSC>0\);

  \item Theorem~\ref{thm:toric-special-fibre}: there exists a smooth projective family \(\pi:\calX\to\C\) satisfying \eqref{eq:smooth-family}.
\end{enumerate}

\subsection{Further discussions}

We close the paper with some remarks on future directions.

\subsubsection{Classification of threefolds with $\HSC\geq0$}

According to Matsumura's structure theorem (cf. \cite[Theorem 1.1]{matsumura2025}), the main difficulty lies in classifying all rationally connected manifolds with $\HSC\geq0$. The classification of compact K\"ahler $3$-folds with \(\HSC\geq 0\) remains largely open.
\begin{enumerate}
  \item For compact K\"ahler manifolds with $\HSC>0$ in dimension at least \(3\), the existing results only establish rational connectedness. In dimension \(2\), rational connectedness is equivalent to rationality. In dimension at least \(3\), however, rationality and rational connectedness are no longer equivalent; they are related by the chain
  \begin{equation}\label{eq:rationality-line}
    \text{rational}
    \Longrightarrow
    \text{stably rational}
    \Longrightarrow
    \text{unirational}
    \Longrightarrow
    \text{rationally connected}.
  \end{equation}
  We note that all examples with \(\HSC>0\) produced by Theorem~\ref{theorem-blowup-HSC}---namely, projective manifolds obtained from projective toric manifolds by a finite sequence of blow-ups at points---are rational. Another known class of examples is due to Alvarez--Heier--Zheng \cite{AlvarezHeierZheng2018}, who proved that the projectivization of a vector bundle over a compact K\"ahler manifold with \(\HSC>0\) again admits a K\"ahler metric with \(\HSC>0\). Since the projectivization of a vector bundle over a rational manifold is rational, all known examples with \(\HSC>0\) arising from these two constructions are rational. Thus, the precise position of the condition \(\HSC>0\) in the hierarchy \eqref{eq:rationality-line} remains unclear\footnote{We remark that a recent paper \cite{LiZhangZhang2025} proves that a projective
manifold is rationally connected if and only if its tangent bundle is mean
curvature positive.  Moreover, \(\HSC>0\) implies mean curvature positivity
(cf. \cite[Proposition 3.6 and Remark 3.7]{LiZhangZhang2025}).}.

  \item Another difficulty concerns the construction of K\"ahler metrics with \(\HSC>0\) on blow-ups, namely the three-dimensional case of Problem \ref{prob:Yau}. In dimension at least \(3\), one must consider blow-ups along smooth curves, and our approach using Kleiman's parameter spaces for finite sequences of point blow-ups no longer applies.
\end{enumerate}

\subsubsection{Relationship between $\Ric>0$ and $\HSC>0$}

Recently, Brown \cite{Brown2024} proved that every uniruled surface admits a K\"ahler metric of positive scalar curvature $\mathrm{S}>0$. Together with Theorem \ref{theorem-rational-HSC} and the Calabi--Yau theorem \cite{Yau1978}, the correspondence between the algebraic classification of compact complex surfaces and the existence of K\"ahler metrics with natural curvature positivity is now well understood:
\[
\begin{tikzcd}[column sep=large,row sep=large]
  \mathbb P^2 \arrow[r, Rightarrow] \arrow[d, Leftrightarrow]
  & \mathrm{Fano} \arrow[r, Rightarrow] \arrow[d, Leftrightarrow]
  & \mathrm{rational} \arrow[r, Rightarrow] \arrow[d, Leftrightarrow]
  & \mathrm{uniruled} \arrow[d, Leftrightarrow] \\
  {\operatorname{HBSC}>0} \arrow[r, Rightarrow] \arrow[rr, Rightarrow, bend right=14]
  & {\Ric>0} \arrow[r, red, Rightarrow] \arrow[rr, Rightarrow, bend right=14]
  & {\HSC>0} \arrow[r, Rightarrow]
  & {\mathrm{S}>0}.
\end{tikzcd}
\]
Among these curvature positivity conditions, the black arrows hold for a single K\"ahler metric and follow directly from algebraic relations among the curvature tensors. The red arrow follows from the Calabi--Yau theorem and Theorem \ref{theorem-rational-HSC}. In the negative counterpart, the Wu--Yau theorem \cite{WuYau2016,TosattiYang2017} asserts ``$\HSC<0\Rightarrow\Ric<0$'' for compact K\"ahler manifolds. In higher dimensions, the relationship between $\Ric>0$ and $\HSC>0$ remains poorly understood. A classification of manifolds with $\HSC> 0$ may be helpful to this issue.

\begin{remark}
  The basic idea originated from the attempt to find a K\"ahler metric with $\HSC>0$ on the toy model $\mathrm{Bl}_{p_1,p_2}\CP^2$. Using computer-assisted numerical searches and Guillemin's formula \cite{Guillemin1994} for the canonical symplectic potential, the author was surprised to find that any canonical toric K\"ahler metric on $\mathrm{Bl}_{p_1,p_2}\CP^2$ appears to have $\HSC>0$.
\end{remark}

\medskip

\textbf{Acknowledgement.} The author would like to express his sincere gratitude to his Ph.D. supervisor, Xi Zhang, for constant encouragement and support. He would also like to thank Shin-ichi Matsumura for a private discussion at Tohoku University in 2024, which drew his attention to the remaining part of Problem \ref{prob:Yau}.

\section{Preliminaries on toric manifolds}
\addtocontents{toc}{\protect\setcounter{tocdepth}{1}}
\label{sec:projective-toric-manifolds}
In this section, we briefly review smooth projective toric manifolds from the
algebraic viewpoint and explain how such a manifold can be identified with the
underlying complex manifold of a symplectic toric manifold equipped with the
canonical toric K\"ahler structure arising from Delzant's construction, following \cite{Abreu2003,ACL03,Audin2004,Cannas08,CLS11}. The
purpose is to provide necessary facts used later in the proof.  For more comprehensive treatments, we refer to \cite{Weinstein77,Oda88,Fulton93,Cox03,Cannas08,McDuffSalamon2017} and the references therein. 

The bridge between the
algebraic and symplectic viewpoints is provided by Delzant polytopes.

\begin{definition}[Polytopes]
\label{def:delzant-polytope}
Let \(L\simeq\Z^n\) be a lattice and put \(L_\R=L\otimes_\Z\R\).  A
\emph{polytope} $\Delta$ in \(L_\R\) is the convex hull of finitely many points in
\(L_\R\).
\begin{itemize}
  \item $\Delta$ is called \emph{lattice} if
  all its vertices lie in \(L\).
  \item \(\Delta\) is called \emph{Delzant}
  if, at each vertex \(p\) of \(\Delta\), exactly \(n\) edges meet at \(p\), and
  these edges have primitive integral directions
  \(u_1,\ldots,u_n\in L\) which form a \(\Z\)-basis of \(L\).
\end{itemize}
\end{definition}

\subsection{Algebraic side}
\label{subsec:toric-algebraic-side}

\begin{definition}
A \emph{projective toric variety} is a projective variety $X$ equipped with an
action of a complex torus $(\C^*)^m$ having an open dense orbit $\mO$.
\end{definition}

Without loss of generality, we always assume that the action of $(\C^*)^m$ is effective.

\subsubsection{Examples}

Let us give some examples of projective toric varieties used later.

\begin{lemma}[Toric blow-ups]\label{lem:invariant-subvariety-blowup-toric}
Let $X$ be a projective toric variety with an action of a torus $G=(\C^*)^m$.  If
$Z\subsetneq X$ is a proper $G$-invariant closed subvariety, then $\Bl_Z X$ is a
projective toric variety with an action of $G$ such that the projection
\[
  \pi:\Bl_Z X\longrightarrow X
\]
is $G$-equivariant.
\end{lemma}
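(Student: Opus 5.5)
The plan is to produce the $G$-action on $\Bl_Z X$ from the universal property of blowing up, and then check the three required properties in turn: projectivity, the open dense orbit, and equivariance of $\pi$. Recall that $\pi:\Bl_Z X\to X$ is characterized as follows: the ideal sheaf $\mathcal I_Z\cdot\mO_{\Bl_Z X}$ is invertible, and any morphism $f:W\to X$ for which $\mathcal I_Z\cdot\mO_W$ is invertible factors uniquely through $\pi$.

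\emph{The action.} For each $g\in G$, the automorphism $g:X\to X$ preserves $Z$, so $g^{*}\mathcal I_Z=\mathcal I_Z$. Hence $\mathcal I_Z\cdot\mO_{\Bl_Z X}$ is invertible also for the composite $g\circ\pi$, and the universal property gives a unique $\tilde g:\Bl_Z X\to\Bl_Z X$ with $\pi\circ\tilde g=g\circ\pi$. Uniqueness yields $\widetilde{gh}=\tilde g\tilde h$ and $\tilde e=\mathrm{id}$, so the $\tilde g$ form a group action.

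To see that this action is algebraic, apply the same argument to the morphism $G\times\Bl_Z X\to X$, $(g,x)\mapsto g\cdot\pi(x)$. Since $\Bl_{G\times Z}(G\times X)\simeq G\times\Bl_Z X$, flat base change gives that the pullback of $\mathcal I_Z$ under the action map $\sigma:G\times X\to X$ equals the pullback under the projection $G\times X\to X$, namely $\mathcal I_{G\times Z}$. This uses that $Z$ is $G$-invariant scheme-theoretically, i.e.\ as the reduced subvariety, because $G$ is reduced and connected. Alternatively, one can write $\Bl_Z X=\mathrm{Proj}\bigoplus_k\mathcal I_Z^k$, observe that $G$ acts on this graded sheaf of algebras compatibly with its action on $X$, and take the induced action on $\mathrm{Proj}$. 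I expect this step, justifying that the action is a morphism $G\times\Bl_ZX\to\Bl_ZX$ rather than merely a group of automorphisms, to be the only real obstacle, and I would handle it through the $\mathrm{Proj}$ description.

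\emph{Projectivity.} $\Bl_Z X$ is projective because it is $\mathrm{Proj}$ of a finitely generated graded $\mO_X$-algebra over the projective variety $X$. Concretely, $\pi^*\mathcal L^{m}\otimes\mO(-E)$ is ample for $\mathcal L$ ample on $X$ and $m\gg0$.

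\emph{Dense orbit and effectivity.} Let $\mO$ be the open dense orbit. Since $Z$ is a proper closed invariant subset, $Z\cap\mO=\emptyset$: otherwise $Z\supseteq\mO$, and then $Z=X$ by density. Therefore $\pi$ is an isomorphism over $X\setminus Z\supseteq\mO$, and it is equivariant there. Thus $\pi^{-1}(\mO)$ is a single $G$-orbit, open in $\Bl_Z X$. It is dense because $\Bl_Z X$ is irreducible, being the closure of $\pi^{-1}(X\setminus Z)$. Effectivity is inherited from $X$ through this orbit. Equivariance of $\pi$ holds by construction.
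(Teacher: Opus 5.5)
Your proposal is correct and follows the same route as the paper: you lift the action by the universal property (functoriality) of blowing up, then use $Z\cap\mO=\emptyset$ so that $\pi$ is an isomorphism over the open dense orbit, which therefore persists in $\Bl_Z X$. You also supply details the paper leaves implicit (algebraicity of the lifted action, projectivity, density via irreducibility, effectivity); for algebraicity, the cleanest justification is that the action map $\sigma$ is smooth, so $\sigma^{-1}(Z)$ is reduced with support $G\times Z$ and hence equals $G\times Z$ scheme-theoretically, and connectedness of $G$ plays no role.
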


\begin{proof}
The centre $Z$ is $G$-invariant, so the functoriality of blow-ups lifts the effective
$G$-action to $\Bl_Z X$, and the projection is $G$-equivariant.  Let
$\mO\subset X$ be the open dense $G$-orbit.  Since $Z$ is closed,
$G$-invariant, and proper, one has $Z\cap\mO=\emptyset$; otherwise $Z$ would
contain the whole orbit $\mO$, hence its closure $X$.  Thus the blow-up map is
an isomorphism over $\mO$, so $\pi^{-1}(\mO)\simeq\mO$ is again an open dense
$G$-orbit.
\end{proof}

Let $Y$ be a projective toric manifold with an action of $(\C^*)^n$. Let $X$ be the blow-up of $Y$ at some point $p$. By Lemma \ref{lem:invariant-subvariety-blowup-toric}, $X$ is toric if $p$ is $(\C^*)^n$-fixed. Nevertheless, when $p$ is not $(\C^*)^n$-fixed, $X$ might not be toric.

\begin{example}[The degree \(5\) del Pezzo surface]\label{ex:del-Pezzo-surface}
  Consider the action of $(\C^*)^2$ on $\CP^2$ by
  $$[\w_1,\w_2]\cdot [z_1:z_2:z_3]=[\w_1z_1:\w_2z_2:z_3].$$
  The $(\C^*)^2$-open dense orbit is
  $$\bigl\{[z_1:z_2:z_3]:\ z_1z_2z_3\neq 0\bigr\}$$
  and the fixed points are the coordinate points:
  $$p_1=[1:0:0],\quad p_2=[0:1:0],\quad p_3=[0:0:1].$$
  By Lemma \ref{lem:invariant-subvariety-blowup-toric}, $X=\Bl_{p_1,p_2,p_3}\CP^2$ is toric with an action of $(\C^*)^2$. Let \(p_4\in \mathbb P^2\) be a general point, in particular not lying on any
coordinate line, and let \(\tilde p_4\in X\) be its inverse image. Then
\[
Y:=\Bl_{\tilde p_4}X\simeq \Bl_{p_1,p_2,p_3,p_4}\mathbb P^2
\]
is the degree \(5\) del Pezzo surface and its $\mathrm{Aut}(Y)$ is finite; see, for instance \cite[Theorem 8.5.6]{Dolgachev2012}. $Y$ cannot be toric; otherwise it would contain a dense open orbit of some torus and thus $(\C^*)^2\subset\mathrm{Aut}(Y)$.
\end{example}

\begin{example}[Projectivization]\label{ex:Hirzebruch}
    Let $E$ be the Whitney sum of holomorphic line bundles over $X$. If $X$ is a projective toric manifold, then $\CP(E)$ is also a projective toric manifold (see e.g. \cite[Section 7.3]{CLS11}). In particular, all projective spaces $\CP^n$ and Hirzebruch manifolds
    \(
  \mathbb F_{n,k}
\)
are projective toric manifolds.
\end{example}

Projective orbit closures are important models of projective toric varieties, we first review the definition of the character lattice.

\begin{definition}[Character lattice]\label{def:character-lattice}
For the torus \(G=(\C^*)^n\), we denote by
\[
  X^*(G):=\operatorname{Hom}_{\mathrm{alg}}(G,\C^*)
\]
the character lattice of \(G\).  By identifying \(X^*(G)\simeq\Z^n\), a
character \(\lambda=(\lambda_1,\ldots,\lambda_n)\in X^*(G)\) is the algebraic
homomorphism
\[
  \lambda:G\longrightarrow \C^*,
  \qquad
  \lambda(w)=w^\lambda:=w_1^{\lambda_1}\cdots w_n^{\lambda_n}.
\]
\end{definition}

\begin{example}[Projective orbit closures]
\label{ex:projective-orbit-closure-toric}
Let
\(
  A=\{\lambda^{(1)},\ldots,\lambda^{(k)}\}\subset X^*((\C^*)^n)
\)
be a finite set of characters.
These characters define an action of
\((\C^*)^n\) on \(\CP^{k-1}\) by
\[
  w\cdot[z_1:\cdots:z_k]
  =
  [w^{\lambda^{(1)}}z_1:\cdots:w^{\lambda^{(k)}}z_k].
\]
Set
\[
  X_A:
  =
  \overline{
  \bigl\{
  [w^{\lambda^{(1)}}:\cdots:w^{\lambda^{(k)}}]
  \mid w\in(\C^*)^n
  \bigr\}}
  \subset \CP^{k-1},
\]
i.e., \(X_A\) is the closure of the \((\C^*)^n\)-orbit through
\([1:\cdots:1]\).  Then \(X_A\) is a projective toric variety.
\end{example}

\subsubsection{Characterization of projective toric varieties}

By Sumihiro's equivariant embedding theorem \cite[Theorem 1]{Sumihiro74}, 
every normal projective toric variety embeds equivariantly into a projective 
space with a linear torus action. The standard theory of equivariantly 
projective toric varieties identifies this space with a projective orbit closure 
\(X_A\), corresponding to the lattice points \(A = P_A \cap \mathbb{Z}^n\) of a 
lattice polytope \(P_A\) \cite[Theorem II.3.1 and Section II.3.5]{ACL03}. 
Smoothness is then determined by the classical polytope criterion 
\cite[Theorems 2.4.3 and 3.1.19]{CLS11}; see also \cite[Sections 6.4--6.5]{ACL03}.

\begin{theorem}
\label{thm:acl-projective-orbit-closure}
Every normal projective toric variety \(X\) is equivariantly isomorphic to
a projective orbit closure \(X_A\), where
\[
  A=P_A\cap \mathbb Z^n
\]
is the set of lattice points of a lattice polytope \(P_A\). Furthermore, \(X_A\) is smooth if and only if \(P_A\) is Delzant.
\end{theorem}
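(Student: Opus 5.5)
The statement is essentially a repackaging of standard results, so the plan is to assemble the cited ingredients in three steps rather than prove anything from scratch.

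\textbf{Step 1: reduce to an orbit closure.} Let \(X\) be a normal projective toric variety with an effective action of \(G=(\C^*)^n\). By Sumihiro's theorem \cite[Theorem 1]{Sumihiro74}, there is an ample \(G\)-linearized line bundle \(L\) on \(X\) and a \(G\)-equivariant embedding
\[
X\hookrightarrow \CP(H^0(X,L^{\otimes k})^*).
\]
Here \(G\) acts linearly. The action is diagonalizable because \(G\) is a torus, so we may choose a basis of weight vectors with characters \(\lambda^{(1)},\ldots,\lambda^{(N)}\). Since \(X\) is the closure of the open orbit \(\mO\simeq G\), it is the closure of the orbit through some point \(x_0\in\mO\). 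All homogeneous coordinates of \(x_0\) are nonzero after discarding weight spaces on which \(X\) vanishes identically; this is possible because \(X\) is irreducible and not contained in any coordinate hyperplane. Rescaling coordinates moves \(x_0\) to \([1:\cdots:1]\), which exhibits \(X\simeq X_{A'}\) for a finite set \(A'\) of characters.

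\textbf{Step 2: replace \(A'\) by the full set of lattice points.} Let \(P\) be the convex hull of \(A'\). The next task is to show that, after passing to a suitable power \(L^{\otimes k}\), we can take \(A'=P\cap\Z^n\) with \(P\) a lattice polytope. This uses normality: the section ring of an ample bundle on a normal toric variety is the semigroup algebra of the cone over \(P\), and its degree-\(k\) piece is spanned by the lattice points of \(kP\). This is the content of \cite[Theorem II.3.1, II.3.5]{ACL03}, which I would simply invoke. Effectiveness of the action makes \(A'-A'\) generate \(\Z^n\), and this ensures that the orbit map identifies \(G\) with the open orbit.

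\textbf{Step 3: the smoothness criterion.} Local charts of \(X_A\) are indexed by the vertices \(v\) of \(P_A\). The affine chart at \(v\) is \(\operatorname{Spec}\C[S_v]\), where \(S_v\) is the semigroup generated by \(A-v\). This chart is smooth precisely when \(S_v\) is generated by a \(\Z\)-basis, namely when the primitive edge directions at \(v\) form a basis of \(\Z^n\). That condition is the Delzant condition of Definition~\ref{def:delzant-polytope}, and the equivalence is \cite[Theorems 2.4.3, 3.1.19]{CLS11}. Smoothness at a vertex chart suffices: every torus orbit closure contains a fixed point, and the vertex charts cover \(X_A\).

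\textbf{Main obstacle.} The delicate point is normality and saturation in Step 2. Without passing to a sufficiently high multiple, \(A'\) need not equal \(P\cap\Z^n\), and \(X_{A'}\) may be non-normal. I expect this to be handled by choosing \(k\) large so that \(L^{\otimes k}\) is very ample and projectively normal.
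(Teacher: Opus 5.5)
Your proposal is correct and takes essentially the same route as the paper. The paper likewise gets the statement from Sumihiro's equivariant embedding theorem \cite[Theorem 1]{Sumihiro74}, the lattice-point description of equivariantly projective toric varieties \cite[Theorem II.3.1 and Section II.3.5]{ACL03}, and the polytope smoothness criterion \cite[Theorems 2.4.3 and 3.1.19]{CLS11}; your extra details (diagonalizing the torus action, using effectiveness so that $A-A$ generates $\mathbb Z^n$, which is what makes the Delzant condition relative to $\mathbb Z^n$ the right one, and reducing smoothness to the vertex charts) are consistent with those references, except that after passing to $L^{\otimes k}$ the polytope is $kP$ rather than $P$.
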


\subsection{K\"ahler reduction}
\label{subsec:kahler-reduction-flat-complex-euclidean-space}

We first review K\"ahler reduction of the standard flat complex Euclidean
space, since the quotient construction of symplectic toric manifolds will be
used in precisely this form.

\medskip
\noindent\textbf{Reduction assumption 1.}
Let $(\C^n,J_0,g_0,\w_0)$ be the flat complex Euclidean space. 
Let $K$ be a compact Lie group with Lie algebra $\mathfrak k$, and let
\[
  \rho:K\longrightarrow U(n)
\]
be a unitary representation.
\medskip

Thus \(K\) acts on \(\C^n\) by
\(a\cdot z=\rho(a)z\).  In particular, \(J_0\), \(g_0\) and \(\w_0\) are
preserved under the action of \(K\).  For \(\xi\in\mathfrak k\), put
\[
  A_\xi=d\rho(\xi)\in\mathfrak u(n).
\]
We identify each tangent space $T_z\C^n$ with $\C^n$ by translation.  Then the
fundamental vector field generated by $\xi$ is
\begin{equation}
  \label{eq:fundamental-vector-field}
  X_\xi(z)
  =
  \left.\frac{d}{dt}\right|_{t=0}\rho(\exp(t\xi))z
  =
  A_\xi z.
\end{equation}
Recall that a moment map associated to $\rho:K\rightarrow U(n)$ is a smooth map $\mu:\C^n\to\mathfrak k^*$  satisfying
\[
  d\langle\mu,\xi\rangle=\iota_{X_\xi}\omega_0,
\]
for all $\xi\in\mathfrak k$. By a direct computation, one has:

\begin{statement}[The moment map]
\label{st:standard-quadratic-moment-map}
Let $(\mathfrak k^*)^K$ denote the subspace of fixed points of coadjoint action of $K$.  Define $\mu_0:\C^n\to\mathfrak k^*$ by
\begin{equation}
  \label{eq:standard-quadratic-moment-map}
  \langle \mu_0(z),\xi\rangle
  =\frac12\,\omega_0(A_\xi z,z).
\end{equation}
Then for any $c\in(\mathfrak k^*)^K$, $$\mu=\mu_0-c$$ is a moment map associated to $\rho:K\rightarrow U(n)$. Conversely, any moment map is of this form.
\end{statement}

The moment map \(\mu\) of the above form is equivariant with respect to the
\(K\)-action on \(\C^n\) and the coadjoint \(K\)-action on \(\mathfrak k^*\).
Hence, for any \(m\in\mu^{-1}(0)\) and any \(a\in K\), one has
\(\mu(a\cdot m)=0\).  In other words, the \(K\)-action on \(\C^n\) restricts to
a \(K\)-action on \(\mu^{-1}(0)\).

\medskip
\noindent\textbf{Reduction assumption 2.} Let $\mu$ be a moment map associated to $\rho:K\rightarrow U(n)$ and suppose that $K$ acts freely on $Z:=\mu^{-1}(0)$.

\begin{statement}[Symplectic reduction of Marsden--Weinstein \cite{MW74} or Meyer \cite{Meyer73}]
\label{st:mwm-zero-level-reduction}
Under the above assumptions, let \(\iota:Z\hookrightarrow\C^n\) be the inclusion map. Then:
\begin{itemize}
  \item $Z$ is a smooth submanifold of $\C^n$ of dimension \(2n-\dim K\), and
\(M=Z/K\)
is a smooth manifold of dimension \(2n-2\dim K\);
 \item The projection $\pi:Z\to M$ is a submersion;
 \item There is a symplectic form $\omega_M$ on $M$ such that
 \[
  \pi^*\omega_M=\iota^*\omega_0.
\]
\end{itemize}
\end{statement}

For $z\in Z$, the orbit $K\cdot z$ is an embedded submanifold with tangent space
at $z$ equal to
\[
  \Ver_z=\{A_\xi z:\xi\in\mathfrak k\}.
\]

\begin{statement}[The orthogonal splitting of the tangent bundle]
\label{st:orthogonal-splitting}
The normal directions to $Z\subset\C^n$ are generated by $J_0\Ver_z$.  With
\(
  \Hor_z=\Ver_z^{\perp_{g_0}}\cap T_zZ,
\)
one has the $g_0$-orthogonal splittings
\begin{equation}
  \label{eq:g0-orthogonal-splitting}
  T_z\C^n=J_0\Ver_z\oplus(\Ver_z\oplus\Hor_z),
  \qquad
  T_zZ=\Ver_z\oplus\Hor_z.
\end{equation}
\end{statement}
The naturally induced metric $g_M$ on $M$ is defined by horizontal lifts: if
$u,v\in T_{[z]}M$ and $\widetilde u,\widetilde v\in\Hor_z$ are the unique
vectors with $d\pi_z(\widetilde u)=u$ and $d\pi_z(\widetilde v)=v$, then
\[
  g_M(u,v)=g_0(\widetilde u,\widetilde v).
\]
The ambient complex structure $J_0$ preserves $\Hor_z$.  Hence $J_0$
descends to an almost complex structure $J_M$ on $M$ by
\[
  J_Mu=d\pi_z(J_0\widetilde u).
\]

\begin{statement}[K\"ahler reduction of Guillemin--Sternberg
\cite{GuilleminSternberg1982} (see also {\cite[Section 3]{HKLR87}})]
\label{st:kahler-reduction-guillemin-sternberg}
  Under the above assumptions, $(M,J_M,g_M,\w_M)$ is a K\"ahler manifold.
\end{statement}

\subsection{Symplectic side}
\label{subsec:symplectic-side}

\begin{definition}[Symplectic toric manifold]
A \emph{symplectic toric manifold} is a compact connected symplectic manifold
\((M,\omega)\) of dimension \(2n\), together with an effective Hamiltonian
action of the torus \(\mT^n\) and a moment map
\[
  \mu:M\longrightarrow(\R^n)^*.
\]
\end{definition}

Delzant's theorem states the
following one-to-one correspondence between equivalence classes:
\[
  \begin{array}{ccc}
    \{\text{symplectic toric manifolds}\}
    & \stackrel{1\text{--}1}{\longleftrightarrow} &
    \{\text{Delzant polytopes}\} \\
    (M^{2n},\omega,\mT^n,\mu)
    & \longmapsto &
    \mu(M).
  \end{array}
\]

We will describe the construction of toric manifolds from Delzant
polytopes, following \cite{Delzant88}, as an instance of the
K\"ahler reduction of flat complex Euclidean space described in
Section~\ref{subsec:kahler-reduction-flat-complex-euclidean-space}: the compact
group is \(N\subset \mT^d\), acting unitarily on \(\C^d\), and the corresponding
moment map is \(\phi_N\) below.

\subsubsection{Delzant's construction}
\label{subsubsec:delzant-construction}
  Let
\(\Delta\subset(\R^n)^*\) be a Delzant polytope, written as
\[
  \Delta
  =
  \{x\in(\R^n)^*\mid \langle x,v_i\rangle\le \lambda_i,\ 
  i=1,\ldots,d\},
\]
where \(v_i\in\Z^n\) are the primitive outward normals to the facets.  Define
\[
  \pi:\R^d\longrightarrow\R^n,
  \qquad
  \pi(e_i)=v_i.
\]
The Delzant condition implies that \(\pi(\Z^d)=\Z^n\), hence \(\pi\) induces
a surjective homomorphism
\[
  \Pi:\mT^d=\R^d/\Z^d\longrightarrow \mT^n=\R^n/\Z^n.
\]
Let \(N=\ker\Pi\), and let \(\mathfrak n=\ker\pi\) be its Lie algebra.  Thus
we have exact sequences
\[
  0\longrightarrow\mathfrak n
  \xrightarrow{\iota}\R^d
  \xrightarrow{\pi}\R^n
  \longrightarrow0
\]
and
\[
  0\longrightarrow(\R^n)^*
  \xrightarrow{\pi^*}(\R^d)^*
  \xrightarrow{\iota^*}\mathfrak n^*
  \longrightarrow0.
\]
For the standard action of \(\mT^d\) on \(\C^d\), the moment map is
\[
  \phi(z)
  =
  -\frac12\bigl(|z_1|^2,\ldots,|z_d|^2\bigr)+\lambda,
  \qquad
  \lambda=(\lambda_1,\ldots,\lambda_d)\in(\R^d)^*.
\]
The induced moment map for the \(N\)-action is
\[
  \phi_N=\iota^*\circ\phi:\C^d\longrightarrow\mathfrak n^*.
\]
\begin{proposition}[see e.g. {\cite[Page 241]{Cannas08}}]\label{prop:verify-assumption}
  Put \(Z=\phi_N^{-1}(0)\).  The
hypotheses needed in
Section~\ref{subsec:kahler-reduction-flat-complex-euclidean-space} are satisfied:
\begin{itemize}
  \item the level set \(Z=\phi_N^{-1}(0)\) is compact;
  \item \(N\) acts freely on \(Z\).
\end{itemize}
\end{proposition}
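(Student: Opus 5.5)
We prove the two bullets of Proposition~\ref{prop:verify-assumption} separately, working directly from the description of \(\Delta\) by the inequalities \(\langle x,v_i\rangle\le\lambda_i\) and the exact sequences above.

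\emph{Compactness.} The first step is to identify \(Z\) with a fibre of the \(\mT^d\)-moment map over \(\Delta\). By exactness of the dual sequence, \(\iota^*(\phi(z))=0\) holds if and only if \(\phi(z)=\pi^*(x)\) for a (unique) \(x\in(\R^n)^*\). Reading this coordinatewise gives
\[
  \lambda_i-\tfrac12|z_i|^2=\langle x,v_i\rangle \quad\text{for } i=1,\ldots,d .
\]
Since \(|z_i|^2\ge0\), any such \(x\) satisfies \(\langle x,v_i\rangle\le\lambda_i\) for all \(i\), so \(x\in\Delta\). Conversely, each \(x\in\Delta\) is hit, by choosing \(|z_i|^2=2(\lambda_i-\langle x,v_i\rangle)\). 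Hence \(Z\) is the preimage of the compact set \(\pi^*(\Delta)\) under the continuous map \(\phi\). Because \(\pi^*\) is injective, \(\Delta\) is bounded, and each \(|z_i|^2\le 2(\lambda_i-\min_\Delta\langle x,v_i\rangle)\), the set \(Z\) is bounded. It is closed as a level set, and therefore compact.

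\emph{Freeness.} Fix \(z\in Z\) with associated \(x\in\Delta\), and let \(I=\{i: z_i=0\}=\{i:\langle x,v_i\rangle=\lambda_i\}\). This is the set of facets containing \(x\). The stabilizer of \(z\) in \(\mT^d\) is the subtorus \(\mT^I\) of elements whose coordinates outside \(I\) are trivial, so the stabilizer in \(N\) is \(N\cap\mT^I\), and we must show this group is trivial. The argument runs as follows.
\begin{itemize}
  \item First, \(x\) lies in some face of \(\Delta\), and that face contains a vertex \(p\) of \(\Delta\). Every facet through \(x\) also passes through \(p\), so \(I\subseteq I_p\), where \(I_p\) is the set of facets through \(p\).
  \item Next, the Delzant condition says the primitive edge directions at \(p\) form a \(\Z\)-basis of \(\Z^n\). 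Since there are exactly \(n\) facets through \(p\), their normals \(\{v_i\}_{i\in I_p}\) form the dual basis up to sign, hence also a \(\Z\)-basis.
  \item Finally, suppose \(t=(t_i)\in\mT^I\), written as \(t=[\sum_{i\in I}s_ie_i]\), lies in \(\ker\Pi\). Then \(\sum_{i\in I}s_iv_i\in\Z^n\). Because \(\{v_i\}_{i\in I}\) is part of a \(\Z\)-basis, this forces every \(s_i\in\Z\), so \(t=1\).
\end{itemize}
Thus \(N\cap\mT^I\) is trivial and \(N\) acts freely on \(Z\).

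The main obstacle is the lattice-theoretic step translating the Delzant condition on edge directions into the statement that the facet normals at a vertex form a \(\Z\)-basis. We will prove it by noting that the edges at \(p\) are the intersections of \(n-1\) of the \(n\) facets through \(p\). Taking \(u_j\) to be the primitive edge direction and \(v_i\) the facet normals, this gives \(\langle u_j,v_i\rangle=0\) for \(i\ne j\), and \(\langle u_j,v_j\rangle\) is a negative integer. The matrix \((\langle u_j,v_i\rangle)\) is then diagonal with integer entries. Unimodularity of \(\{u_j\}\), combined with primitivity of each \(v_i\), forces these diagonal entries to be \(-1\), which makes \(\{v_i\}\) the negative of the dual basis.

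We will also record, as a by-product of the freeness argument, that \(0\) is a regular value of \(\phi_N\), since freeness of the action is equivalent to surjectivity of \(d\phi_N\) on \(Z\). This is what Statement~\ref{st:mwm-zero-level-reduction} uses.
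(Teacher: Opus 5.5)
Your argument is correct and is essentially the standard proof in Cannas da Silva to which the paper defers (the paper gives no proof of its own): compactness from $\phi(Z)=\pi^*(\Delta)$, and freeness by showing the stabilizer $N\cap\mT^I$ is trivial because $I$ is contained in the facet set of a vertex, where the Delzant condition makes the corresponding $v_i$ a $\Z$-basis. Two small corrections. In the first bullet you must take the face to be the minimal one, $\bigcap_{i\in I}\{x\in\Delta:\langle x,v_i\rangle=\lambda_i\}$, because for an arbitrary face containing $x$ the claim that every facet through $x$ passes through $p$ can fail. In your last remark, freeness implies, but is not equivalent to, surjectivity of $d\phi_N$ along $Z$ (surjectivity only gives discrete stabilizers), and that implication is all you need.
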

Therefore, by Statement~\ref{st:kahler-reduction-guillemin-sternberg}, the
K\"ahler reduction of the flat complex Euclidean space
\((\C^d,J_0,g_0,\w_0)\) by \(N\) is a compact K\"ahler manifold
\[
  (M_\Delta,J_\Delta,g_\Delta,\omega_\Delta):=(\C^d,J_0,g_0,\omega_0)//N,
  \qquad
  M_\Delta=Z/N.
\]
The induced \(\mT^n\simeq \mT^d/N\)-moment map has image \(\Delta\).

Conversely, for a symplectic toric manifold $(M,\w)$ with an action of $\mathbb T^n$ and a moment map $\mu:M\rightarrow (\R^n)^*$, the image of $\mu(M)$ is in fact a Delzant polytope $\Delta$. Delzant showed that there exists an $\mathbb{T}^n$-equivariant symplectomorphism
$$\psi:(M,\w,\mu)\xrightarrow{\sim} (M_\Delta,\w_\Delta,\mu_\Delta).$$
We call $(M_\Delta,J_\Delta,\w_\Delta)$ \emph{the canonical toric K\"ahler structure} of a given symplectic toric manifold $(M,\w)$.

\subsubsection{Symplectic vs Algebraic}
The main correspondences described above may be summarized as follows:
\[
  \begin{array}{ccc}
    \{\text{smooth projective toric varieties }X_A\}
     \longleftrightarrow
    \{\text{Delzant lattice polytopes }P_A\}
  \end{array}
\]
and, on the symplectic side,
\[
  \begin{array}{ccc}
    \{\text{Delzant polytopes }\Delta\}
     \longleftrightarrow
    \{\text{symplectic toric manifolds }(M_{\Delta},\w_{\Delta})\}.
  \end{array}
\]
Suppose that $P_A$ is a Delzant lattice polytope and
\(
  \iota_A:X_A\hookrightarrow\CP^{\ell-1}
\)
is the corresponding equivariant projective embedding. The restriction of the \((\C^*)^n\)-action to its real subgroup
\[
  \mT^n=\{(t_1,\ldots,t_n)\in(\C^*)^n \mid |t_i|=1 \text{ for all } i\}
\]
is effective, because the action of \((\C^*)^n\) on \(X_A\) was already
effective.  Put
\[
  \Delta_A=-\frac12 P_A.
\]
With our moment-map convention, the \(\mT^n\)-action on \(X_A\) is Hamiltonian
with moment map
\[
  \mu_A:X_A\longrightarrow(\R^n)^*,
  \qquad
  \mu_A([z_1:\cdots:z_k])
  =
  -\frac12\,
  \frac{\sum_{j=1}^k \lambda^{(j)} |z_j|^2}
       {\sum_{j=1}^k |z_j|^2}.
\]
Its image is \(\Delta_A\) (cf. {\cite[Lecture 6, Section 6.6]{ACL03}}).
Therefore, by
Delzant's theorem, there exists a \(\mT^n\)-equivariant symplectomorphism
    \[
      \psi_A:
      (X_A,\iota_A^*(\w_{\mathrm{FS}}),\mu_A)
      \xrightarrow{\ \sim\ }
      (M_{\Delta_A},\w_{\Delta_A},\mu_{\Delta_A}).
    \]
	The complex structure of $X_A$ induces a complex structure $J_A$ on the symplectic manifold $(M_{\Delta_A},\w_{\Delta_A})$. By \cite[Proposition A.1]{Abreu2003}, there exists an \(\mT^n\)-equivariant biholomorphism
	\[
	  \varphi_A:
	  (M_{\Delta_A},J_A,\mT^n)
	  \xrightarrow{\ \sim\ }
	  (M_{\Delta_A},J_{\Delta_A},\mT^n).
	\]

  \vspace{0.2cm}

  In summary, we have the following statement.
\begin{proposition}\label{prop:alg-sym}
    Every projective toric manifold is equivariantly biholomorphic to the underlying complex manifold of a symplectic toric manifold equipped with its canonical toric complex structure.
\end{proposition}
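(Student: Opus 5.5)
The plan is to assemble Proposition~\ref{prop:alg-sym} from the three correspondences already recorded in this section. Let \(X\) be a projective toric manifold with an effective action of \(G=(\C^*)^n\).

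First, \(X\) is smooth, hence normal. Theorem~\ref{thm:acl-projective-orbit-closure} then gives a \(G\)-equivariant isomorphism \(X\simeq X_A\), where \(A=P_A\cap\Z^n\) and \(P_A\) is a lattice polytope. Smoothness of \(X_A\) forces \(P_A\) to be Delzant. One small point needs checking here: the torus in the orbit-closure model should be the same \(n\)-dimensional torus acting effectively. This holds because the characters in \(A\) span the character lattice after a lattice translation. That is exactly the Delzant condition at a vertex, since the edge directions at a vertex form a \(\Z\)-basis.

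Next, equip \(X_A\subset\CP^{k-1}\) with \(\iota_A^*\w_{\mathrm{FS}}\). The compact subtorus \(\mT^n\subset G\) acts effectively and in a Hamiltonian way, with moment map \(\mu_A\) whose image is \(\Delta_A=-\tfrac12P_A\). This image is again Delzant, because scaling by \(-\tfrac12\) preserves the edge directions up to sign and hence preserves the basis condition. So \((X_A,\iota_A^*\w_{\mathrm{FS}},\mT^n,\mu_A)\) is a symplectic toric manifold. Delzant's uniqueness theorem then produces the \(\mT^n\)-equivariant symplectomorphism \(\psi_A\) onto \((M_{\Delta_A},\w_{\Delta_A},\mu_{\Delta_A})\).

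Transporting the complex structure of \(X_A\) via \(\psi_A\) gives a \(\mT^n\)-invariant complex structure \(J_A\) on \(M_{\Delta_A}\). This structure is compatible with \(\w_{\Delta_A}\) and has the same moment polytope. By \cite[Proposition A.1]{Abreu2003}, there is a \(\mT^n\)-equivariant biholomorphism \(\varphi_A\colon (M_{\Delta_A},J_A)\to(M_{\Delta_A},J_{\Delta_A})\). The composite
\[
X\simeq X_A\xrightarrow{\psi_A}(M_{\Delta_A},J_A)\xrightarrow{\varphi_A}(M_{\Delta_A},J_{\Delta_A})
\]
is therefore a biholomorphism that is \(\mT^n\)-equivariant.

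It remains to upgrade \(\mT^n\)-equivariance to \(G\)-equivariance. The \(G\)-action on both sides is holomorphic and extends the \(\mT^n\)-action, being its complexification. Two holomorphic actions of \(G\) that agree on the totally real subgroup \(\mT^n\) agree on all of \(G\) by the identity theorem. Hence the composite is \(G\)-equivariant.

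I expect the main obstacle to be the application of Abreu's result. One must verify its hypotheses: \(J_A\) must be a \(\mT^n\)-invariant integrable complex structure compatible with \(\w_{\Delta_A}\). This follows because \(\psi_A\) is an equivariant symplectomorphism and \(\iota_A^*\w_{\mathrm{FS}}\) is Kähler for the complex structure of \(X_A\).
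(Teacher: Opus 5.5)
Your proposal is correct and follows the paper's argument essentially step for step: the equivariant orbit-closure model \(X\simeq X_A\) with \(P_A\) Delzant, the Fubini--Study symplectic toric structure with moment polytope \(\Delta_A=-\tfrac12P_A\), Delzant's uniqueness theorem giving \(\psi_A\), and Abreu's Proposition~A.1 giving \(\varphi_A\). Your final upgrade from \(\mT^n\)- to \((\C^*)^n\)-equivariance goes beyond what the paper proves, since it only records a \(\mT^n\)-equivariant biholomorphism, and it is fine provided you justify that the \(\mT^n\)-action on \((M_{\Delta_A},J_{\Delta_A})\) extends to a holomorphic \((\C^*)^n\)-action; this follows, e.g.\ from the holomorphic-quotient description of \(M_{\Delta_A}\) or from the complexification of compact group actions on compact complex manifolds.
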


We remark that $\psi_A^*\varphi_A^*\w_{\Delta_A}$ and $\iota_A^*\w_{\mathrm{FS}}$ is different in general.

\medskip

\addtocontents{toc}{\protect\setcounter{tocdepth}{2}}
\section{Positive holomorphic sectional curvature on K\"ahler reduction}
\label{sec:hsc-kahler-reduction}

In this section we prove the following statement and then conclude that any projective toric manifold admits a K\"ahler metric with $\HSC>0$.

\begin{theorem}\label{theorem-reduction-hsc}\label{thm:flat-quotient-curvature}
Let $(M,g_M)$ be a K\"ahler reduction of the flat complex Euclidean space introduced in Section \ref{subsec:kahler-reduction-flat-complex-euclidean-space}. Then the quotient K\"ahler metric $g_M$ has 
\(
  \HSC_{g_M}\ge0.
\)
  If, in addition, the level set $Z=\mu^{-1}(0)$ is compact, then
\(
  \HSC_{g_M}>0.
\)
\end{theorem}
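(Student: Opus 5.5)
Throughout, fix $z\in Z$ and a horizontal unit vector $X\in\Hor_z$ projecting to $u\in T_{[z]}M$. We compute $\HSC_{g_M}(u)$ in two steps: Gauss's equation for $Z\subset\C^n$, then O'Neill's formula for the Riemannian submersion $\pi:Z\to M$.

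\textbf{Gauss step.} The ambient space is flat, so Gauss's equation reduces to
\[
R^Z(X,JX,JX,X)=g_0(\sff(X,X),\sff(JX,JX))-|\sff(X,JX)|^2 ,
\]
where $J=J_0$. By Statement~\ref{st:orthogonal-splitting} the normal space is $J_0\Ver_z$. We compute the second fundamental form by differentiating the defining equations $\langle\mu,\xi\rangle=0$. The Hessian of $\langle\mu_0,\xi\rangle$ is the quadratic form $v\mapsto\frac12\omega_0(A_\xi v,v)=\frac12 g_0(A_\xi v, -J_0 v)$, up to sign conventions. Since $\nabla\langle\mu,\xi\rangle=\pm J_0A_\xi z$, the normal component of $\sff(X,Y)$ paired against $J_0A_\xi z$ equals, up to a universal sign, $\operatorname{Re}\, h(A_\xi X,Y)$-type expressions. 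Here $h$ is the Hermitian form, and the pairing goes through the Gram matrix $G_{\xi\eta}=g_0(A_\xi z,A_\eta z)$. The key point is that $A_\xi$ is complex linear. Hence $\sff(JX,JX)=\sff(X,X)$ up to sign conventions, which will need checking. More precisely, $\omega_0(A_\xi JX,JX)=\omega_0(A_\xi X,X)$ while $\sff(X,JX)$ involves $\omega_0(A_\xi X,JX)$. This gives $R^Z(X,JX,JX,X)=|a|^2_{G^{-1}}-|b|^2_{G^{-1}}$, where $a_\xi=g_0(A_\xi X,X)$ and $b_\xi=g_0(A_\xi X,JX)$.

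\textbf{O'Neill step.} For horizontal $X,Y$, O'Neill's formula gives
\[
R^M(u,Ju,Ju,u)=R^Z(X,JX,JX,X)+3|\mathcal A_XJX|^2, \qquad \mathcal A_XY=\tfrac12[X,Y]^{\Ver}.
\]
We compute $\mathcal A_XJX$ using that $\omega_0(A_\xi z,\cdot)$ annihilates $T_zZ$ and that the horizontal distribution is cut out by $g_0(A_\xi z,\cdot)=0$. Differentiating the latter gives $g_0(\mathcal A_XJX, A_\xi z)=\pm g_0(A_\xi X,JX)=\pm b_\xi$. Thus $|\mathcal A_XJX|^2=|b|^2_{G^{-1}}$. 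The vertical contribution from O'Neill and the normal contribution from Gauss should then combine as
\[
\HSC(u)=|a|^2_{G^{-1}}-|b|^2_{G^{-1}}+3|b|^2_{G^{-1}} \ge |a|^2_{G^{-1}}+2|b|^2_{G^{-1}}\ \ge 0 .
\]

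\textbf{Main obstacle.} The hard part is the precise bookkeeping of signs and constants. The danger is that the Gauss term $-|\sff(X,JX)|^2$ and the O'Neill term are both built from the same quantity $b$, and the whole argument rests on the O'Neill coefficient $3$ beating the Gauss coefficient $-1$. I would first carry out the computation in the model case $K=S^1$, $\C^n\to\CP^{n-1}$. There $a=0$ and $b_\xi=|X|^2$, so the formula predicts $\HSC=2|b|^2/|z|^2>0$, matching the constant holomorphic sectional curvature of Fubini--Study. This calibrates all the conventions. In general one should in fact find $\HSC(u)=|a|^2_{G^{-1}}+|b|^2_{G^{-1}}$ up to a positive factor, i.e. $\sum_{\xi}|h(A_\xi X,X)|^2$ measured in $G^{-1}$, where $h$ is the Hermitian form with $h(A_\xi X,X)=a_\xi+\im b_\xi$ up to sign conventions. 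That would give $\HSC\ge0$.

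\textbf{Strict positivity.} Equality $\HSC(u)=0$ forces $h(A_\xi X,X)=0$ for all $\xi\in\mathfrak k$. Suppose $Z$ is compact. The quantity $\HSC(u)$ is continuous and $K$-invariant on the unit horizontal sphere bundle over $Z$, which is compact, so it suffices to exclude zeros pointwise. I would use the $K$-equivariant structure: differentiating $\mu$ along the complexified orbit shows that $\langle\mu_0,\xi\rangle$ is nondecreasing in suitable directions. Consider the complex line $\C X$, horizontal at $z$. If $h(A_\xi X,X)=0$ for all $\xi$, then $\mu_0(z+tX)=\mu_0(z)$ for all real $t$, because $\mu_0$ is quadratic with cross term $\omega_0(A_\xi z,X)=0$ (as $X\in T_zZ$) and pure term $\omega_0(A_\xi X,X)\propto b_\xi=0$. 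Hence the affine line $z+\R X$ lies in $Z$, contradicting compactness. Therefore $\HSC>0$ at every point, and compactness of the unit horizontal sphere bundle yields a uniform positive lower bound.
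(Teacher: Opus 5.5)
Your overall route is the paper's: Gauss's equation for $Z\subset\C^n$, O'Neill's formula for $Z\to M$, and the affine-line argument for strict positivity. Your O'Neill computation $|\mathcal A_XJX|^2=|b|^2_{G^{-1}}$ and your strictness paragraph are correct. The Gauss step, however, is wrong as written, and the ``main obstacle'' you describe does not exist.

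Since $A_\xi\in\mathfrak u(n)$ is $g_0$-skew, your $a_\xi=g_0(A_\xi X,X)$ vanishes identically. Use $\omega_0(U,V)=g_0(J_0U,V)$ and $\langle\sff(X,Y),N_\xi\rangle=-g_0(J_0A_\xi X,Y)$ for the normal field $N_\xi=J_0A_\xi$. Then
\[
\langle\sff(X,X),N_\xi\rangle=g_0(A_\xi X,J_0X)=b_\xi,\qquad \langle\sff(X,J_0X),N_\xi\rangle=-g_0(A_\xi X,X)=-a_\xi=0 .
\]
So you have swapped $a$ and $b$. The quantity $\omega_0(A_\xi X,X)=-b_\xi$ controls $\sff(X,X)=\sff(J_0X,J_0X)$; your own strictness paragraph uses exactly this identification. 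The quantity $\omega_0(A_\xi X,J_0X)=a_\xi=0$ controls $\sff(X,J_0X)$. Hence $R^Z(X,J_0X,J_0X,X)=+|b|^2_{G^{-1}}$, not $|a|^2_{G^{-1}}-|b|^2_{G^{-1}}=-|b|^2_{G^{-1}}$.

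Consequently both contributions are nonnegative, and there is no competition between the Gauss coefficient and the O'Neill coefficient. For unit $X$ one gets $\HSC(u)=|b|^2_{G^{-1}}+3|b|^2_{G^{-1}}=4|b|^2_{G^{-1}}$. This is exactly the paper's $16\|Q_{\widetilde u}\|^2_{B_z^{-1}}$, since $b=-2Q_{\widetilde u}$ and $G=B_z$.

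Your calibration would have caught the error had it been carried out carefully. For $S^1\subset U(n)$ the level set is the round sphere of radius $|z|$, whose sectional curvature is $+1/|z|^2$; your Gauss formula makes it $-1/|z|^2$. The quotient Fubini--Study metric then has $\HSC=4/|z|^2$, not $2/|z|^2$. Once the Gauss term is corrected, your argument coincides with the paper's. As submitted, nonnegativity is derived from a false intermediate identity and only survives because $3>1$. The remark about $\langle\mu_0,\xi\rangle$ being nondecreasing along complexified orbits is unused and can be dropped.
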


\begin{remark}
The semi-positivity of the holomorphic sectional curvature follows directly from Kobayashi's curvature formula for submersions of CR submanifolds \cite[Theorem 1.3]{Kobayashi1987} (see e.g. \cite[Section 3]{Manikandan23} for a discussion).
\end{remark} 

The key point of our argument is that compactness of the level set imposes
strict positivity, which relies on the following formula of the holomorphic sectional curvature in terms of $A_\xi$ and $\w_0$.

\begin{lemma}\label{lem:hsc-quadratic-term}
Let $u\in T_{[z]}M$ be any nonzero tangent vector at some point $[z]\in M$, and let $\widetilde u\in\Hor_z$ be its
horizontal lift.  Define the quadratic term $Q_{\widetilde u}\in\mathfrak k^*$ by
\begin{equation}
  \label{eq:quadratic-term}
  \langle Q_{\widetilde u},\xi\rangle
  =
  \frac12\,\omega_0(A_\xi\widetilde u,\widetilde u),
  \qquad \xi\in\mathfrak k.
\end{equation}
Then
\begin{equation}
  \label{eq:hsc-quadratic-term}
  \HSC_{g_M}(u)
  =
  \frac{16\,\|Q_{\widetilde u}\|_{B_z^{-1}}^2}{\|\widetilde{u}\|_{g_0}^4},
\end{equation}
where $B_z$ is the positive form on $\mathfrak{k}$ given by 
\[
  B_z(\eta,\xi)=g_0(A_\eta z,A_\xi z),
  \quad \forall\eta,\xi\in\mathfrak k.
\]
\end{lemma}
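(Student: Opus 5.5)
We compute the holomorphic sectional curvature in two steps, following the strategy outlined in Subsection~\ref{subsec:strategy}. First we use Gauss's equation for the submanifold $Z\subset\C^n$. Then we use O'Neill's formula for the Riemannian submersion $\pi:Z\to M$.

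\textbf{Step 1: Gauss's equation.} Fix $z\in Z$ and $\widetilde u\in\Hor_z$. By Statement~\ref{st:orthogonal-splitting}, the normal space of $Z$ at $z$ is $J_0\Ver_z=\{J_0A_\xi z\}$. Differentiating the defining equation $\langle\mu_0,\xi\rangle=c_\xi$ twice shows that the second fundamental form of $Z$ satisfies
\[
  g_0\bigl(\sff(v,w),J_0A_\xi z\bigr)=\pm\,\omega_0(A_\xi v,w).
\]
Up to sign, this is the Hessian of the quadratic function $\frac12\omega_0(A_\xi\cdot,\cdot)$ divided by the gradient normalization. Writing $\sff(v,w)=J_0A_{\eta(v,w)}z$, we get $B_z(\eta(v,w),\xi)=\pm\omega_0(A_\xi v,w)$. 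Hence
\[
  \|\sff(\widetilde u,\widetilde u)\|^2=\|2Q_{\widetilde u}\|_{B_z^{-1}}^2,
\]
and the analogous quantity for $\sff(\widetilde u,J_0\widetilde u)$ is $\omega_0(A_\xi\widetilde u,J_0\widetilde u)$. Since $A_\xi$ is skew-Hermitian, $g_0(A_\xi\widetilde u,\widetilde u)=0$, so $\sff(\widetilde u,J_0\widetilde u)=0$. Flatness of $\C^n$ and Gauss's equation then give
\[
  R^Z(\widetilde u,J\widetilde u,J\widetilde u,\widetilde u)
  =g(\sff(\widetilde u,\widetilde u),\sff(J\widetilde u,J\widetilde u))-\|\sff(\widetilde u,J\widetilde u)\|^2 .
\]
Using $\omega_0(A_\xi J_0\widetilde u,J_0\widetilde u)=\omega_0(A_\xi\widetilde u,\widetilde u)$, which holds because $A_\xi$ is complex linear, we have $\sff(J\widetilde u,J\widetilde u)=\sff(\widetilde u,\widetilde u)$. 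Therefore this term equals $4\|Q_{\widetilde u}\|^2_{B_z^{-1}}$.

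\textbf{Step 2: O'Neill's formula.} O'Neill's formula gives
\[
  K_M(u,Ju)=K_Z(\widetilde u,J\widetilde u)+\tfrac34\|[\widetilde u,J\widetilde u]^{\Ver}\|^2/\|\widetilde u\wedge J\widetilde u\|^2 .
\]
The vertical part of $[\widetilde U,J\widetilde U]$ is computed through the curvature of the horizontal distribution. Pairing with $A_\xi z$ reduces it to $d\theta$ of the connection form $\theta_\xi\propto g_0(\cdot,A_\xi z)$ restricted to $Z$. This yields $g_0([\widetilde u,J\widetilde u]^{\Ver},A_\xi z)=\pm2\,g_0(A_\xi\widetilde u,J\widetilde u)=\pm2\,\omega_0(A_\xi\widetilde u,\widetilde u)$, where we again use $\widetilde u\perp\Ver$. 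Hence the O'Neill term is $\tfrac34\cdot16\|Q\|^2_{B^{-1}}=12\|Q\|^2_{B^{-1}}$. Adding the Gauss contribution $4\|Q\|^2$ gives $16\|Q_{\widetilde u}\|^2_{B_z^{-1}}$, and dividing by $\|\widetilde u\|^4$ gives \eqref{eq:hsc-quadratic-term}.

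\textbf{Main obstacle.} The main difficulty is bookkeeping of normalizations: the factors of $2$ in the moment map, the sign conventions for $\omega_0(\cdot,\cdot)$ versus $g_0(J\cdot,\cdot)$, and the identification of the dual norm $B_z^{-1}$ via $\eta\mapsto B_z(\eta,\cdot)$. These must be arranged so that the Gauss and O'Neill contributions are both positive multiples of the same quantity $\|Q\|^2$ and combine to exactly $16$. I would fix conventions by checking the case $K=U(1)$ acting diagonally on $\C^n$, where $M=\CP^{n-1}$ with the Fubini--Study metric of constant holomorphic sectional curvature. This calibrates all constants before doing the general computation.
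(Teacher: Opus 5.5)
Your proposal is correct and follows the paper's route. The Gauss equation for $Z\subset\C^n$ gives $\|\sff(\widetilde u,\widetilde u)\|^2=4\|Q_{\widetilde u}\|_{B_z^{-1}}^2$, using $\sff(J_0\widetilde u,J_0\widetilde u)=\sff(\widetilde u,\widetilde u)$ and $\sff(\widetilde u,J_0\widetilde u)=0$, and O'Neill's formula adds $12\|Q_{\widetilde u}\|_{B_z^{-1}}^2$. The only cosmetic difference is that you write the O'Neill term as $\tfrac34\|[\widetilde u,J_0\widetilde u]^{\Ver}\|^2$ and evaluate it via the exterior derivative of the forms $g_0(\cdot,A_\xi w)$, whereas the paper writes it as $3\|\mathcal A_{\widetilde u}(J_0\widetilde u)\|^2$ and obtains $\mathcal A$ by differentiating $g_0(J_0E,X_\xi)=0$; these agree since $\mathcal A_XY=\tfrac12[X,Y]^{\Ver}$ for horizontal $X,Y$.
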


\medskip

The proof is a direct computation using Gauss equation for the embedding $Z\hookrightarrow \C^d$ and O'Neill's curvature formula for the quotient map $Z\rightarrow Z/N$. 

\begin{proof}[Proof of Lemma \ref{lem:hsc-quadratic-term}]

The freeness of the $K$-action on $Z$ implies that
\[
  \mathfrak k\longrightarrow T_zZ,\qquad
  \xi\longmapsto X_\xi(z)=A_\xi z
\]
is injective, and hence
$B_z$ is positive definite. For $\xi\in\mathfrak k$, write
\[
  N_\xi(w)=J_0A_\xi w.
\]
By \eqref{eq:g0-orthogonal-splitting}, the vectors $N_\xi(z)$ span the normal
space to $Z$ at $z$, and
\[
  g_0(N_\eta(z),N_\xi(z))=B_z(\eta,\xi).
\]

Let $\sff$ be the second fundamental form of $Z\subset (\C^n,g_0)$, and let
$\nabla$ denote the Levi-Civita connection of $g_0$. Since $g_0$ is flat, the Gauss equation for $Z\subset \C^n$ gives
\[
R^Z(\widetilde u,J_0\widetilde u,J_0\widetilde u,\widetilde u)
  =
  \langle \sff(\widetilde u,\widetilde u),\sff(J_0 \widetilde u,J_0 \widetilde u)\rangle-\| \sff(\widetilde u,J_0\widetilde u)\|^2.\]
Since \(N_\xi\) is the ambient linear vector field \(w\mapsto J_0A_\xi w\) and $\nabla$ is flat,
\[
  (\nabla_{\widetilde u}N_\xi)_z=J_0A_\xi\widetilde u
\]
and so
\[
  \langle \sff(\widetilde u,\widetilde u),N_\xi\rangle
  =-g_0(\nabla_{\widetilde{u}}N_\xi,\widetilde{u})=
  -g_0(J_0A_\xi\widetilde u,\widetilde u)
  =-\w_0(A_\xi\widetilde{u},\widetilde{u})=
  -2\langle Q_{\widetilde u},\xi\rangle.
\]
The same calculation gives
\[
  \sff(J_0\widetilde u,J_0\widetilde u)
  =
  \sff(\widetilde u,\widetilde u),
  \qquad
  \sff(\widetilde u,J_0\widetilde u)=0,
\]
because $A_\xi$ commutes with $J_0$ and is skew-adjoint with respect to $g_0$.
Therefore we have
\begin{equation}
  \label{eq:gauss-flat-ambient-curvature}
  R^Z(\widetilde u,J_0\widetilde u,J_0\widetilde u,\widetilde u)
  =
  \|\sff(\widetilde u,\widetilde u)\|^2=4\,\|Q_{\widetilde u}\|_{B_z^{-1}}^2.
\end{equation}

O'Neill's curvature formula \cite{ONeill66} gives
\[
  R^M(u,J_Mu,J_Mu,u)
  =
  R^Z(\widetilde u,J_0\widetilde u,J_0\widetilde u,\widetilde u)
  +3\|\mathcal A_{\widetilde u}(J_0\widetilde u)\|^2.
\] It remains to compute O'Neill's tensor $\mathcal A$, which is given by
\[
  \mathcal A_EF=(\nabla^Z_EF)^{\Ver},\quad\forall E,F\in\Hor_Z,
\]
 where $\nabla^Z$ is the Levi-Civita
connection of the induced metric on $Z$.  Let $E$ be a local horizontal
extension of $\widetilde u$.  Since $J_0$ preserves $\Hor$ and \eqref{eq:g0-orthogonal-splitting}, one has $$g_0(J_0E,X_\xi)=0$$ along $Z$.  Differentiating in the direction
of $E$ gives
\[
  0
  =
  g_0(\nabla^Z_E(J_0E),X_\xi)
  +g_0(J_0E,\nabla^Z_E X_\xi).
\]
At $z$, the first term is
$g_0(\mathcal A_{\widetilde u}(J_0\widetilde u),A_\xi z)$.  Since
$J_0\widetilde u\in \Hor_z\subset T_zZ$, we have
\begin{equation}
  \label{eq:oneill-tensor-quadratic-term}
  g_0\bigl(\mathcal A_{\widetilde u}(J_0\widetilde u),A_\xi z\bigr)
  =-g_0(J_0\widetilde u,\nabla_{\widetilde{u}}(A_\xi z))=-\w_0(\widetilde{u},A_\xi\widetilde u)=
  2\langle Q_{\widetilde u},\xi\rangle .
\end{equation}
It follows that
\begin{equation}
  \label{eq:oneill-quadratic-term-norm}
  \|\mathcal A_{\widetilde u}(J_0\widetilde u)\|^2
  =
  4\,\|Q_{\widetilde u}\|_{B_z^{-1}}^2.
\end{equation}
Combining this with \eqref{eq:gauss-flat-ambient-curvature} and
\eqref{eq:oneill-quadratic-term-norm}, and using
$\|u\|_{g_M}=\|\widetilde u\|_{g_0}$, proves
\eqref{eq:hsc-quadratic-term}.
\end{proof}

Now let us complete the proof of Theorem \ref{theorem-reduction-hsc}.

\begin{proof}[Proof of Theorem~\ref{thm:flat-quotient-curvature}]
Let $u\ne0$, and let $\widetilde u\in\Hor_z$ be its horizontal lift.  The
formula in Lemma~\ref{lem:hsc-quadratic-term} gives
\(\HSC_{g_M}(u)\ge0\).

Assume now that $Z$ is compact.  Since $\widetilde u\in T_zZ$, one has
$d\mu_z(\widetilde u)=0$.  The nonconstant part of the moment map is quadratic,
hence for real $t$,
\[
  \mu(z+t\widetilde u)
  =
  \mu(z)+t\,d\mu_z(\widetilde u)+t^2Q_{\widetilde u}
  =
  t^2Q_{\widetilde u}.
\]
If $Q_{\widetilde u}=0$, then the whole affine real line
$z+t\widetilde u$ lies in $Z=\mu^{-1}(0)$, contradicting the compactness assumption.  Therefore
$Q_{\widetilde u}\ne0$ and so
\eqref{eq:hsc-quadratic-term} gives \(\HSC_{g_M}(u)>0.\)
\end{proof}

As a direct consequence, we obtain the following statement.

\begin{theorem}\label{thm:guillemin-positive}
Let \((M_\Delta,J_\Delta,\omega_\Delta)\) be the canonical toric K\"ahler
manifold associated with a Delzant polytope \(\Delta\). Then \(\omega_\Delta\)
has \(\HSC>0\).
\end{theorem}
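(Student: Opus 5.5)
The plan is to deduce Theorem~\ref{thm:guillemin-positive} directly from Theorem~\ref{thm:flat-quotient-curvature}. The canonical toric K\"ahler manifold \((M_\Delta,J_\Delta,\omega_\Delta)\) is, by construction, the K\"ahler reduction \((\C^d,J_0,g_0,\omega_0)//N\). Here \(N=\ker\Pi\subset\mT^d\) is the compact (abelian) subgroup, acting unitarily on \(\C^d\) through the diagonal representation \(\rho:N\hookrightarrow\mT^d\subset U(d)\), and the moment map is \(\phi_N=\iota^*\circ\phi\). So it suffices to check that this situation satisfies Reduction assumptions~1 and~2 of Section~\ref{subsec:kahler-reduction-flat-complex-euclidean-space}, together with compactness of the level set.

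First, I will confirm that \(\phi_N\) is a moment map of the form in Statement~\ref{st:standard-quadratic-moment-map}. For \(\xi\in\mathfrak n\subset\R^d\), the infinitesimal generator is \(A_\xi=\mathrm{diag}(2\pi\im\,\xi_1,\dots)\), up to the normalization convention for \(\mT^d=\R^d/\Z^d\). Then \(\tfrac12\omega_0(A_\xi z,z)\) is a constant multiple of \(-\tfrac12\sum\xi_i|z_i|^2\), which is exactly \(\langle\iota^*\phi(z)-\iota^*\lambda,\xi\rangle\). Since \(N\) is abelian, \((\mathfrak n^*)^N=\mathfrak n^*\), so subtracting the constant \(c=-\iota^*\lambda\) is allowed. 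Any normalization discrepancy only rescales the moment map and does not affect the level set \(\phi_N^{-1}(0)\) once constants are matched.

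Next, Proposition~\ref{prop:verify-assumption} gives that \(Z=\phi_N^{-1}(0)\) is compact and that \(N\) acts freely on \(Z\). This is Reduction assumption~2 plus the extra compactness hypothesis. Theorem~\ref{thm:flat-quotient-curvature} then yields \(\HSC_{g_\Delta}>0\), where \(g_\Delta\) is the quotient metric. Since \(g_\Delta(\cdot,\cdot)=\omega_\Delta(\cdot,J_\Delta\cdot)\) by Statement~\ref{st:kahler-reduction-guillemin-sternberg}, this is the claimed positivity for \(\omega_\Delta\).

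The only real obstacle is bookkeeping, not mathematics. One must make sure the moment map conventions (the sign of \(\phi\), the factor \(2\pi\) from the lattice normalization) match Statement~\ref{st:standard-quadratic-moment-map}, so that the level set used in Delzant's construction is literally \(\mu^{-1}(0)\) for an admissible \(\mu\). As a sanity check, the strict positivity argument in the proof of Theorem~\ref{thm:flat-quotient-curvature} only uses that \(\mu\) is quadratic plus a constant and that \(\mu^{-1}(0)\) is compact, and both properties are insensitive to these conventions.
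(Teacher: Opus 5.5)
Your proposal is correct and follows the paper's proof exactly. You realize $(M_\Delta,J_\Delta,\omega_\Delta)$ as the K\"ahler reduction $(\C^d,J_0,g_0,\omega_0)//N$ with moment map $\phi_N$, invoke Proposition~\ref{prop:verify-assumption} for freeness of the $N$-action and compactness of $Z=\phi_N^{-1}(0)$, and then apply Theorem~\ref{thm:flat-quotient-curvature}. Your extra check that $\phi_N$ has the admissible form $\mu_0-c$ is a detail the paper leaves implicit: $c$ is automatically coadjoint-invariant since $N$ is abelian, and the $2\pi$ normalization is harmless because rescaling $A_\xi$ leaves $\|Q_{\widetilde u}\|^2_{B_z^{-1}}$ unchanged.
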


\begin{proof}
  Recall Delzant's construction and use the notations of Section \ref{subsubsec:delzant-construction}, the compact K\"ahler manifold
\(
  (M_{\Delta},J_{\Delta},\w_{\Delta})
\)
is the K\"ahler reduction of a standard flat complex space $(\C^d,J_0,g_0,\w_0)$, with an action of $N$ and the moment map $\phi_N$. By Proposition \ref{prop:verify-assumption}, the level set $Z:=\phi_N^{-1}(0)$ is compact.  Hence Theorem \ref{theorem-reduction-hsc} applies to $\w_\Delta$, and $\w_\Delta$ has $\HSC>0$.
\end{proof}

\begin{remark}
   Since every K\"ahler class on a projective toric manifold admits a canonical toric K\"ahler metric associated with a Delzant polytope, applying Theorem \ref{thm:guillemin-positive} to Hirzebruch manifolds recovers Yang-Zheng's result \cite[Theorem 1.3]{YanZhe19}.
\end{remark}

\medskip

\section{Toric deformation of ordered cluster blow-ups}
\label{sec:toric-deformation-ordered-cluster-blowups}

In this section, we aim to construct a smooth projective family satisfying \eqref{eq:smooth-family}.  We then combine this deformation
with Theorem \ref{thm:guillemin-positive} to complete the proof of Theorem \ref{theorem-blowup-HSC}.

\subsection{The universal family}
\label{subsec:universal-family-ordered-clusters}
This subsection is preparatory. We review Kleiman's construction
\cite{Kleiman81}  for iterated blow-ups, and clarify the lifted torus actions.

\begin{definition}\label{def:ordered-cluster}
Let $Y$ be a smooth projective manifold.  An \emph{ordered cluster of length $k$}
on $Y$ is a sequence
\[
  \calC=(p_1,\ldots,p_k)
\]
defined inductively: $p_1\in Y$, $p_2$ is a point on $\Bl_{p_1}Y$, and in
general $p_i$ is a point on the manifold $\Bl_{(p_1,\cdots,p_{i-1})}Y$ obtained after blowing up
$p_1,\ldots,p_{i-1}$. We write
$\Bl_{\calC}Y$ for the projective manifold obtained by blowing up $p_1,\cdots,p_{k}$ inductively.
\end{definition}

\subsubsection{Universal family}
 In the notation below, \(\calY_{r-1}\)
parametrizes ordered clusters of length \(r\), while
\(\pi_r:\calY_r\to\calY_{r-1}\) is the corresponding universal iterated
blow-up family.

\begin{proposition}\label{prop:kleiman-iteration}
Let \(Y\) be a smooth projective manifold.  Set
\(\calY_{-1}=\{\mathrm{pt}\}\), \(\calY_0=Y\), and let
\(\pi_0:\calY_0\to\calY_{-1}\) be the structure morphism.  Then, for every
\(r\ge1\), there is a smooth projective manifold \(\calY_r\) and a smooth
projective morphism (i.e., a holomorphic submersion with projective fibres)
\[
  \pi_r:\calY_r\longrightarrow\calY_{r-1}
\]
such that every point \(c\in\calY_{r-1}\) determines an ordered cluster
\(\calC=(p_1,\ldots,p_r)\) on \(Y\), together with a natural fibre
isomorphism
\[
  \Phi_c:(\calY_r)_c:=\pi_r^{-1}(c)
  \xrightarrow{\ \simeq\ }
  \Bl_{\calC}Y.
\]
The construction is inductive and one has $p_r:=\Phi_{c'}(c)$ for $c'=\pi_{r-1}(c)$. 
\end{proposition}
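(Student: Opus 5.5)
We prove the statement by induction on \(r\), using Kleiman's construction: the base of the next universal family is the total space of the previous one. For \(r=1\), put \(\calY_1:=\Bl_{\Delta}(Y\times Y)\), where \(\Delta\subset Y\times Y\) is the diagonal. The map \(\pi_1\) is the composition of the blow-down with the first projection \(\mathrm{pr}_1\). Here \(c=p_1\in Y=\calY_0\), and the fibre \(\pi_1^{-1}(p_1)\) should be identified with \(\Bl_{p_1}Y\).

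For the inductive step, assume \(\pi_r:\calY_r\to\calY_{r-1}\) has been constructed with the stated properties. Consider the fibre product \(\calY_r\times_{\calY_{r-1}}\calY_r\). It is smooth, since \(\pi_r\) is a submersion, and projective. It contains the relative diagonal \(\Delta_r\), which is a smooth closed submanifold of codimension equal to the relative dimension \(n=\dim Y\). Define
\[
  \calY_{r+1}:=\Bl_{\Delta_r}\bigl(\calY_r\times_{\calY_{r-1}}\calY_r\bigr),
\]
and let \(\pi_{r+1}\) be the blow-down followed by the first projection to \(\calY_r\).

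A point \(c\in\calY_r\) lies over \(c'=\pi_r(c)\in\calY_{r-1}\). By induction, \(c'\) corresponds to a cluster \((p_1,\dots,p_r)\) with \(\Phi_{c'}:(\calY_r)_{c'}\simeq\Bl_{(p_1,\dots,p_r)}Y\). The new cluster is then \((p_1,\dots,p_r,p_{r+1})\) with \(p_{r+1}:=\Phi_{c'}(c)\).

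The key steps are the following.

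\textbf{(a)} The fibre of \(\mathrm{pr}_1:\calY_r\times_{\calY_{r-1}}\calY_r\to\calY_r\) over \(c\) is \(\{c\}\times(\calY_r)_{c'}\). The diagonal \(\Delta_r\) meets this fibre transversally, in the single point \((c,c)\). Transversality holds because \(\Delta_r\) is a section of \(\mathrm{pr}_1\).

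\textbf{(b)} Blow-up commutes with restriction to a fibre when the centre meets that fibre transversally. More precisely, \(\mathrm{pr}_1\) restricted to \(\Delta_r\) is an isomorphism, so \(\Delta_r\) is flat over the base. The normal bundle of \(\Delta_r\) restricts to the normal bundle of the point \((c,c)\) in the fibre. The local computation is this: in coordinates \((s,x,y)\), with \(s\) the coordinates on \(\calY_{r-1}\) and \(x,y\) the fibre coordinates of the two factors, \(\Delta_r=\{x=y\}\). Hence the blow-up is locally the product of the \((s,x)\)-space with the blow-up of \(\{y-x=0\}\) in \(y\)-space. This product description shows at once that \(\pi_{r+1}\) is a submersion, that \(\calY_{r+1}\) is smooth, and that
\[
  \pi_{r+1}^{-1}(c)\simeq\Bl_{c}\bigl((\calY_r)_{c'}\bigr).
\]
Projectivity follows because blow-ups and fibre products of projective varieties are projective.

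\textbf{(c)} Composing with \(\Phi_{c'}\) gives the fibre isomorphism
\[
  \Phi_c:\pi_{r+1}^{-1}(c)\simeq\Bl_{p_{r+1}}\Bl_{(p_1,\dots,p_r)}Y=\Bl_{\calC}Y,
\]
where we use that the blow-up of a point is functorial under isomorphisms. The base case \(r=1\) is exactly this argument with \(\calY_{-1}\) a point, where the fibre product reduces to \(Y\times Y\).

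The main obstacle is step (b): checking that blowing up the relative diagonal produces a family whose fibres are exactly the blow-ups at the corresponding points, and that the family is smooth. I would handle this by the explicit local coordinate description above rather than by a general base-change theorem for blow-ups, since \(\Delta_r\) is locally the graph of the identity in smooth relative coordinates. The word \emph{natural} in the statement will be justified by noting that every construction above is canonical. In particular, an automorphism of \(Y\) lifts to each \(\calY_r\) compatibly with the maps \(\Phi_c\); this compatibility is what is needed later for the torus actions.
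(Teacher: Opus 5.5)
Your proposal is correct and is essentially the paper's proof. It uses the same recursive definition $\calY_{r+1}=\Bl_{\Delta_r}(\calY_r\times_{\calY_{r-1}}\calY_r)$ with $\pi_{r+1}$ induced by the first projection, and the same local model in relative coordinates with $\Delta_r=\{x=y\}$, giving both the smoothness of $\pi_{r+1}$ and $(\calY_{r+1})_c\simeq\Bl_c\bigl((\calY_r)_{c'}\bigr)$, which is then composed with the blow-up of $\Phi_{c'}$. The only cosmetic difference is that the paper makes the substitution $v=y-x$ explicit, so that the local model is literally the product $(U\times\C^m_x)\times\Bl_0\C^m_v$; your ``blow-up of $\{y-x=0\}$ in $y$-space'' should be read in that sense.
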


For \(Y=\CP^2\), analogous identifications appear in Harbourne's
Proposition I.2 \cite{Harbourne88}; see also \cite{Roe04,KPT11} for related
versions. The proof in the present setting is essentially the same, and we give
the construction and describe the fibres explicitly for the reader's
convenience.

\begin{proof}
Suppose that
\(
  \pi_{r-1}:\calY_{r-1}\to\calY_{r-2}
\)
has been constructed.  Let
\(
  \calZ_r=\calY_{r-1}\times_{\calY_{r-2}}\calY_{r-1},
\)
and let \(\Delta_r\subset\calZ_r\) be the relative diagonal.  Define
\[
  \calY_r=\Bl_{\Delta_r}\calZ_r,
\]
and let \(\pi_r:\calY_r\to\calY_{r-1}\) be the morphism induced by the first
projection \(\calZ_r\to\calY_{r-1}\).  Since fibre products and blow-ups along
closed centres preserve projectivity, each \(\pi_r\) is projective (cf.
\cite[II.7 and III.10]{Hartshorne77}).

A point \(c\in\calY_{k-1}\) first determines a chain of points as follows.  Put
\(c_{k-1}=c\), and let \(c_{-1}\) be the unique point of \(\calY_{-1}\).  If
\(k\ge2\), define
\[
  c_{j-1}=\pi_j(c_j)\in\calY_{j-1},
  \qquad j=k-1,\ldots,1.
\]
Set
\(
  \Phi_0:(\calY_0)_{c_{-1}}=Y\xrightarrow{\ \simeq\ }Y
\)
to be the identity and put \(p_1=\Phi_0(c_0)=c_0\).  We will prove by induction
on \(r=0,\ldots,k\) that \(\pi_r\) is smooth and that there is an isomorphism
\[
  \Phi_r:(\calY_r)_{c_{r-1}}
  \xrightarrow{\ \simeq\ }
  \Bl_{(p_1,\ldots,p_r)}Y.
\]
Once this isomorphism has been constructed, if \(r<k\), the next centre is
defined by
\[
  p_{r+1}:=\Phi_r(c_r)\in\Bl_{(p_1,\ldots,p_r)}Y.
\]

The case $r=0$ is clear. Assume that the construction has been completed up to \(r-1\).  Thus
\(\pi_{r-1}\) is smooth and we have an isomorphism
\[
  \Phi_{r-1}:(\calY_{r-1})_{c_{r-2}}
  \xrightarrow{\ \simeq\ }
  \Bl_{(p_1,\ldots,p_{r-1})}Y
\]
and the centre
\[
  p_r=\Phi_{r-1}(c_{r-1})
  \in\Bl_{(p_1,\ldots,p_{r-1})}Y.
\]
Let \(m=\dim Y\).
Since \(\pi_{r-1}\) is smooth of relative dimension \(m\), after choosing a
local analytic neighbourhood \(U\) of \(c_{r-2}\), choose fibre coordinates
\(x\) and \(y\) on the two factors of
\(\calZ_r=\calY_{r-1}\times_{\calY_{r-2}}\calY_{r-1}\):
\[
  V_x\simeq U\times\C^m_x,
  \qquad
  V_y\simeq U\times\C^m_y,
\]
with \(\pi_{r-1}\) given by projection to \(U\).  Thus, near
\((c_{r-1},c_{r-1})\in\Delta_r\),
\[
  \calZ_r|_U\simeq U\times\C^m_x\times\C^m_y,
  \qquad
  \Delta_r|_U=\{x=y\}.
\]
With respect to the first projection
\((b,x,y)\mapsto(b,x)\), set \(v=y-x\).  Then the pair is locally
\[
  (\calZ_r,\Delta_r)
  \simeq
  \bigl((U\times\C^m_x)\times\C^m_v,\,
        (U\times\C^m_x)\times\{0\}\bigr).
\]
Thus
\[
  \calY_r|_U
  =
  \Bl_{\Delta_r}\calZ_r|_U
  \simeq
  (U\times\C^m_x)\times\Bl_0\C^m_v,
\]
and \(\pi_r\) is the projection to \(U\times\C^m_x\), which proves smoothness.
Write \(c_{r-2}=b_0\) and \(c_{r-1}=(b_0,x_0)\) in the above coordinates. On the fibre \(b=b_0,\ x=x_0\), the coordinate \(v=y-x_0\)
identifies the centre \(v=0\) with the point \(y=x_0\), namely with
\(c_{r-1}\in(\calY_{r-1})_{c_{r-2}}\). Then restricting the preceding local model to the fibre over \(c_{r-1}\) gives the
natural isomorphism
\[
  \Psi_r:
  (\calY_r)_{c_{r-1}}\simeq (b_0,x_0)\times \Bl_0\C_v^m
  \simeq
  \Bl_{\{c_{r-1}\}}\bigl((\calY_{r-1})_{c_{r-2}}\bigr)
\]
Since \(p_r=\Phi_{r-1}(c_{r-1})\), the isomorphism \(\Phi_{r-1}\) induces an
isomorphism of blow-ups
\[
  \Bl_{\Phi_{r-1}}:
  \Bl_{\{c_{r-1}\}}\bigl((\calY_{r-1})_{c_{r-2}}\bigr)
  \xrightarrow{\ \simeq\ }
  \Bl_{\{p_r\}}\Bl_{(p_1,\ldots,p_{r-1})}Y
  =
  \Bl_{(p_1,\ldots,p_r)}Y
\]
and thus \(\Phi_r=\Bl_{\Phi_{r-1}}\circ\Psi_r\) gives the desired isomorphism.
This completes the induction step.
Taking \(r=k\) gives an ordered cluster
\(\calC=(p_1,\cdots,p_{k})\) and
\[
  \Phi_c:=\Phi_k:
  (\calY_k)_c=(\calY_k)_{c_{k-1}}
  \xrightarrow{\ \simeq\ }
  \Bl_{\calC}Y.
\]
The one-to-one correspondence is immediate from the recursive construction.
\end{proof}

\subsubsection{Lifts of group actions}
For
a \(G\)-variety \(X\), write \(X^G\) for its \(G\)-fixed point set.

\begin{lemma}\label{lem:G-action-lifts}
Let \(Y\) be a projective manifold with a $G$-action. The \(G\)-action on \(Y\) induces natural \(G\)-actions on every
\(\calY_r\), and each map
\[
  \pi_r:\calY_r\to\calY_{r-1}
\]
is \(G\)-equivariant.
\end{lemma}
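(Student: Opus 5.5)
We argue by induction on \(r\ge0\), following the recursive construction in the proof of Proposition~\ref{prop:kleiman-iteration}. We prove the stronger statement that \(G\) acts algebraically on \(\calY_r\) and \(\calY_{r-1}\) with \(\pi_r\) equivariant. For the base case, \(\calY_{-1}=\{\mathrm{pt}\}\) carries the trivial action. The space \(\calY_0=Y\) carries the given action, and the structure morphism \(\pi_0\) is trivially equivariant.

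For the inductive step, assume \(G\) acts on \(\calY_{r-1}\) and \(\calY_{r-2}\) with \(\pi_{r-1}\) equivariant. We then let \(G\) act diagonally on \(\calY_{r-1}\times\calY_{r-1}\), by \(a\cdot(x,y)=(a x,a y)\). Because \(\pi_{r-1}(ax)=a\,\pi_{r-1}(x)\), the condition \(\pi_{r-1}(x)=\pi_{r-1}(y)\) is preserved. Hence the fibre product \(\calZ_r=\calY_{r-1}\times_{\calY_{r-2}}\calY_{r-1}\) is a \(G\)-stable closed subvariety. The same computation shows that the relative diagonal \(\Delta_r=\{x=y\}\) is \(G\)-stable. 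Both projections \(\calZ_r\to\calY_{r-1}\) are \(G\)-equivariant.

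Next we lift the action to \(\calY_r=\Bl_{\Delta_r}\calZ_r\) by functoriality of blow-ups, exactly as in Lemma~\ref{lem:invariant-subvariety-blowup-toric}. Each \(a\in G\) acts as an automorphism of \(\calZ_r\) preserving \(\Delta_r\), so it lifts uniquely to an automorphism \(\tilde a\) of the blow-up commuting with the blow-down map. Uniqueness gives \(\widetilde{ab}=\tilde a\tilde b\) and \(\tilde e=\mathrm{id}\).

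To see that the lifted action is algebraic, we apply functoriality to the automorphism \((a,w)\mapsto(a,aw)\) of \(G\times\calZ_r\). This automorphism preserves \(G\times\Delta_r\). Since blowing up commutes with the flat base change \(G\times\calZ_r\to\calZ_r\), we have \(\Bl_{G\times\Delta_r}(G\times\calZ_r)=G\times\calY_r\). The lifted automorphism of \(G\times\calY_r\), composed with the projection to \(\calY_r\), is the action morphism \(G\times\calY_r\to\calY_r\).

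Finally, \(\pi_r\) is the composition of the blow-down \(\calY_r\to\calZ_r\) with the first projection \(\calZ_r\to\calY_{r-1}\). Both maps are equivariant, so \(\pi_r\) is \(G\)-equivariant, which completes the induction.

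The only point needing care is the algebraicity of the lifted action, handled by the base-change step above. Everything else is formal. As an optional consistency check, one can verify that the action is compatible with the fibre isomorphisms \(\Phi_c\): the map \(a\) sends \((\calY_r)_c\) to \((\calY_r)_{ac}\) and corresponds to the blow-up of \(Y\) along the translated cluster \(a\calC\). This follows inductively from the naturality of \(\Bl_{\Phi_{r-1}}\), and it will be useful later for identifying fixed points with toric clusters.
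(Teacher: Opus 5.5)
Your proof is correct and follows essentially the same route as the paper: induction on \(r\), the diagonal action on \(\calZ_r=\calY_{r-1}\times_{\calY_{r-2}}\calY_{r-1}\) (well defined because \(\pi_{r-1}\) is equivariant), invariance of the relative diagonal, lifting to \(\Bl_{\Delta_r}\calZ_r\) by the universal property of blowing up, and equivariance of the first projection. Your extra flat-base-change argument, showing that the lifted action is algebraic, spells out a point the paper leaves implicit in ``the construction is functorial.'' This algebraicity is exactly what the paper needs later, when the orbit map \(t\mapsto\lambda(t)c\) is extended across \(0\).
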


\begin{proof}
The construction is functorial.  Inductively, suppose \(G\) acts on
\(\calY_{r-1}\) and \(\calY_{r-2}\), and that \(\pi_{r-1}\) is
\(G\)-equivariant.  Then
\(
  \calZ_r=\calY_{r-1}\times_{\calY_{r-2}}\calY_{r-1}
\)
carries the action by
\[
  g\cdot(y_1,y_2)=(g\cdot y_1,g\cdot y_2).
\]
This is well-defined because \(\pi_{r-1}\) is \(G\)-equivariant.  The relative
diagonal \(\Delta_r\) is invariant, and the action lifts to
\(
  \calY_r=\Bl_{\Delta_r}\calZ_r
\)
by the universal property of blowing up.  The first projection
\(\calZ_r\to\calY_{r-1}\) is \(G\)-equivariant, hence so is
\(\pi_r:\calY_r\to\calY_{r-1}\).
\end{proof}

\begin{definition}\label{def:toric-ordered-cluster}
An ordered cluster $(q_1,\ldots,q_k)$ on a projective toric manifold $Y$
is called \emph{toric} if
\[
  q_i\in\big(\Bl_{(q_1,\cdots,q_{i-1})}Y\big)^G,\quad \forall 1\leq i\leq k
\]
\end{definition}

\begin{proposition}\label{prop:fixed-clusters}
Let \(Y\) be a projective toric manifold with acting torus \(G=(\C^*)^m\). Under the correspondence given in Proposition
\ref{prop:kleiman-iteration}, the fixed point set \(\calY_{k-1}^G\) corresponds
exactly to the toric ordered clusters of length \(k\) on $Y$.  Moreover, if \(c\in\calY_{k-1}^G\) corresponds to the toric ordered cluster
\(\calC\), then the fibre isomorphism
\[
  \Phi_c:(\calY_k)_c\xrightarrow{\ \simeq\ }\Bl_{\calC}Y
\]
is \(G\)-equivariant.  In particular, it identifies \((\calY_k)_c\), with
its restricted \(G\)-action, with \(\Bl_{\calC}Y\) as a \(G\)-toric manifold.
\end{proposition}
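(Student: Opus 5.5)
We argue by induction on \(k\), proving simultaneously two claims: (a) a point \(c\in\calY_{k-1}\) is \(G\)-fixed if and only if its associated ordered cluster \(\calC=(p_1,\ldots,p_k)\) is toric; (b) for \(c\in\calY_{k-1}^G\), the fibre isomorphism \(\Phi_c\) of Proposition~\ref{prop:kleiman-iteration} is \(G\)-equivariant, where \((\calY_k)_c\) carries the restriction of the \(G\)-action from Lemma~\ref{lem:G-action-lifts} (well defined because \(\pi_k\) is equivariant and \(c\) is fixed). Here \(\Bl_{\calC}Y\) carries the \(G\)-action obtained by lifting successively through the blow-ups of the fixed points \(p_i\). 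Once (b) holds, Lemma~\ref{lem:invariant-subvariety-blowup-toric} shows that \(\Bl_{\calC}Y\) is a \(G\)-toric manifold, and the last sentence of the proposition follows.

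For the base case \(k=1\), we have \(\calY_0=Y\), \(\Phi_0=\mathrm{id}\) and \(p_1=c_0\). Claim (a) is tautological. For (b), the fibre \((\calY_1)_{c_0}\) is \(\Bl_{c_0}Y\) embedded in \(\Bl_{\Delta}(Y\times Y)\) over \(\{c_0\}\times Y\). The \(G\)-action on \(\calY_1\) is induced by the diagonal action on \(Y\times Y\), and on the slice \(\{c_0\}\times Y\) it coincides with the action on the second factor. Blowing up is functorial, so the local identification \(\Psi_1\) is equivariant. To make this precise without coordinates, I will use that \(\{c_0\}\times Y\) is transverse to \(\Delta\), so the strict transform equals \(\Bl_{(c_0,c_0)}(\{c_0\}\times Y)\). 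Equivalently, \(\pi_1^{-1}(c_0)\) is the blow-up of the fibre along \(\Delta\cap\)fibre, and this identification is canonical and hence equivariant for every automorphism preserving the data.

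For the inductive step, write \(c=c_{k-1}\) and \(c'=c_{k-2}=\pi_{k-1}(c)\). Since \(\pi_{k-1}\) is equivariant, \(c\) being fixed implies \(c'\) is fixed. By induction, the cluster \((p_1,\ldots,p_{k-1})\) is then toric and \(\Phi_{c'}\) is \(G\)-equivariant. Since \(p_k=\Phi_{c'}(c)\) and \(\Phi_{c'}\) is an equivariant isomorphism, \(p_k\) is \(G\)-fixed in \(\Bl_{(p_1,\ldots,p_{k-1})}Y\), so \(\calC\) is toric.

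The converse direction needs care, because the fixed-point property of \(c'\) is not automatic from the hypothesis on \(c\). Suppose \(\calC\) is toric. Then the truncated cluster is toric, so by induction \(c'\) is fixed and \(\Phi_{c'}\) is equivariant. Hence \(c=\Phi_{c'}^{-1}(p_k)\) is a fixed point of the fibre \((\calY_{k-1})_{c'}\), and therefore of \(\calY_{k-1}\). Here I use injectivity of the correspondence, i.e. that \(c\) is determined by its cluster, which is the one-to-one statement at the end of Proposition~\ref{prop:kleiman-iteration}.

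For (b) at level \(k\), the construction gives \(\Phi_c=\Bl_{\Phi_{c'}}\circ\Psi_k\). The map \(\Bl_{\Phi_{c'}}\) is the blow-up of an equivariant isomorphism at a fixed point, so it is equivariant by the universal property. The main obstacle is checking that \(\Psi_k\) is equivariant. Since \(\Psi_k\) was defined using local analytic coordinates, which are not \(G\)-invariant, I plan to recharacterise it intrinsically. The fibre of \(\calZ_k\to\calY_{k-1}\) (first projection) over \(c\) is \(\{c\}\times(\calY_{k-1})_{c'}\). This fibre meets \(\Delta_k\) transversally in the single point \((c,c)\), because \(\Delta_k\) is a section of the first projection. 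Consequently the fibre of \(\Bl_{\Delta_k}\calZ_k\) over \(c\) is canonically \(\Bl_{(c,c)}\bigl(\{c\}\times(\calY_{k-1})_{c'}\bigr)\), by the compatibility of blow-ups with flat base change or restriction to transverse submanifolds. It remains to note that this canonical map agrees with \(\Psi_k\), and that the lifted \(G\)-action restricts on this fibre to the action induced from the second factor. Since \(g\) fixes \(c\), it preserves the slice and fixes \((c,c)\), so the functoriality of blow-ups gives equivariance, which closes the induction.
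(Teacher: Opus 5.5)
Your proposal is correct and follows the paper's proof essentially step by step. It uses the same simultaneous induction on $k$ and the same use of $\Phi_{c'}$ and its inverse for the two directions of the fixed-point correspondence. It also uses the same factorisation $\Phi_c=\Bl_{\Phi_{c'}}\circ\Psi_k$, with equivariance of $(\calY_k)_c\simeq\Bl_c\bigl((\calY_{k-1})_{c'}\bigr)$ coming from $G$-invariance of the relative diagonal, the fibre and the centre. Your intrinsic description of $\Psi_k$ via the section $\Delta_k$ and blow-up compatibility with restriction to the fibre just spells out what the paper asserts in one line.
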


\begin{proof}
We prove the fixed-point correspondence and the equivariance of the
fibre identifications simultaneously by induction on \(k\).

For \(k=1\), we have \(\calY_0=Y\).  Thus a point of
\(\calY_0^G=Y^G\) is exactly a toric ordered cluster of length one.  It
remains only to check the fibre identification.  By construction,
\[
  \calY_1=\Bl_{\Delta_Y}(Y\times Y),
\]
where \(G\) acts diagonally on \(Y\times Y\), and
\(\pi_1:\calY_1\to Y\) is induced by the first projection.  If
\(c\in Y^G\), then \(\{c\}\times Y\) is \(G\)-invariant and
\[
  \Delta_Y\cap(\{c\}\times Y)=\{(c,c)\}.
\]
Hence the fibre over \(c\) is naturally identified with
\[
  (\calY_1)_c
  \simeq
  \Bl_{(c,c)}(\{c\}\times Y)
  \simeq
  \Bl_cY.
\]
This identification is induced from the diagonal \(G\)-action, and is
therefore \(G\)-equivariant.

Assume that the statement has been proved for \(k-1\).  Let
\(b\in\calY_{k-1}^G\), and set
\[
  b'=\pi_{k-1}(b)\in\calY_{k-2}.
\]
Since \(\pi_{k-1}\) is \(G\)-equivariant, we have
\(b'\in\calY_{k-2}^G\).  By the induction hypothesis, \(b'\) corresponds to
a toric ordered cluster
\[
  \calC'=(q_1,\ldots,q_{k-1}),
\]
and the fibre identification
\[
  \Phi_{b'}:(\calY_{k-1})_{b'}
  \xrightarrow{\ \simeq\ }
  \Bl_{\calC'}Y
\]
is \(G\)-equivariant.  Since \(b\in(\calY_{k-1})_{b'}\) is \(G\)-fixed,
its image
\[
  q_k:=\Phi_{b'}(b)
\]
is a \(G\)-fixed point of \(\Bl_{\calC'}Y\).  Hence \(b\) corresponds,
via Proposition \ref{prop:kleiman-iteration}, to the toric ordered cluster
\[
  \calC=(q_1,\ldots,q_{k-1},q_k).
\]
Conversely, the same argument applied to \(\Phi_{c'}^{-1}\) gives the
reverse direction that $\calC$ is toric implies $c\in\calY_{k-1}^G$. 

It remains to show the $G$-equivariance of $\Phi_b$.  Let \(b\in\calY_{k-1}^G\) correspond to
\(\calC=(q_1,\ldots,q_k)\), and let
\(b'=\pi_{k-1}(b)\), corresponding to
\(\calC'=(q_1,\ldots,q_{k-1})\).  By construction,
\[
  \calY_k
  =
  \Bl_{\Delta_k}
  \bigl(\calY_{k-1}\times_{\calY_{k-2}}\calY_{k-1}\bigr).
\]
Restricting this relative blow-up to the fibre over \(b\) gives a natural
\(G\)-equivariant identification
\[
  (\calY_k)_b
  \simeq
  \Bl_b\bigl((\calY_{k-1})_{b'}\bigr),
\]
because the relative diagonal, the fibre over \(b\), and the centre \(b\)
are all \(G\)-invariant.  By the induction hypothesis, \(\Phi_{b'}\) is
\(G\)-equivariant and sends \(b\) to \(q_k\).  Therefore, by functoriality
of blow-ups, it induces a \(G\)-equivariant isomorphism
\[
  \Bl_b\bigl((\calY_{k-1})_{b'}\bigr)
  \xrightarrow{\ \simeq\ }
  \Bl_{q_k}\bigl(\Bl_{\calC'}Y\bigr)
  =
  \Bl_{\calC}Y.
\]
The resulting composition is precisely the fibre isomorphism \(\Phi_b\)
constructed in Proposition \ref{prop:kleiman-iteration}.  Hence
\(\Phi_b\) is \(G\)-equivariant.  This completes the induction.
\end{proof}

\subsection{Deformation with central toric fibre}
\label{subsec:deformation-central-toric-fibre}

In this subsection we prove the existence of the following deformation family, which is the second key ingredient in proving Theorem \ref{theorem-blowup-HSC}.

\begin{theorem}\label{thm:toric-special-fibre}
Let \(Y\) be a projective toric manifold with acting torus
\(G=(\C^*)^m\), and let \(\calC\) be an ordered cluster of length \(k\)
on \(Y\).  Then there exists a toric ordered cluster \(\calC_0\) of length
\(k\) on \(Y\) and a smooth projective family
\[
  \pi:\calX\longrightarrow \C
\]
such that, writing \(X_t:=\pi^{-1}(t)\), one has
\begin{equation}\label{eq:toric-deformation-fibres}
  X_t\simeq \Bl_{\calC}Y \quad (t\neq 0),
  \qquad
  X_0\simeq \Bl_{\calC_0}Y .
\end{equation}
In particular, the central fibre \(X_0\) is a smooth projective toric
manifold.
\end{theorem}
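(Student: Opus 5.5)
The plan follows the outline of Section~\ref{subsec:strategy}: realize $\calC$ as a point of the Kleiman parameter space, move it by a generic one-parameter subgroup of $G$, and pull back the universal family along the closure of the resulting orbit.

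\textbf{Step 1: the point $c$ and the universal family.} By Proposition~\ref{prop:kleiman-iteration}, the ordered cluster $\calC$ of length $k$ corresponds to a point $c\in\calY_{k-1}$. The map $\pi_k:\calY_k\to\calY_{k-1}$ is a smooth projective morphism with $(\calY_k)_c\simeq\Bl_{\calC}Y$. By Lemma~\ref{lem:G-action-lifts}, $G$ acts on every $\calY_r$ and every $\pi_r$ is $G$-equivariant.

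\textbf{Step 2: a generic one-parameter subgroup.} I would choose $\lambda:\C^*\to G$ with $(\calY_{k-1})^{\lambda(\C^*)}=(\calY_{k-1})^G$. Since $\calY_{k-1}$ is projective with an algebraic $G$-action, it is covered by finitely many $G$-orbit types. The existence of such a $\lambda$ then amounts to avoiding finitely many proper sublattices of the cocharacter lattice. Concretely, for each of the finitely many components $F$ of $\lambda$-fixed loci arising from subtori, one needs $\lambda$ not to lie in the Lie algebra of a stabilizer that is strictly smaller than $G$. I would do this via an equivariant projective embedding $\calY_{k-1}\hookrightarrow\CP^N$ with a linear $G$-action (by Sumihiro, or by linearizing an ample line bundle, possibly after passing to a power). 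In that embedding, the $G$-weights $\chi_1,\ldots,\chi_{N+1}$ are finitely many. Choosing $\lambda$ with $\langle\chi_i-\chi_j,\lambda\rangle\neq0$ whenever $\chi_i\ne\chi_j$ forces $\lambda$-fixed points to lie in sums of $G$-weight spaces with a single weight, and hence to be $G$-fixed.

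\textbf{Step 3: the orbit closure and the pullback.} Consider the map $f:\C^*\to\calY_{k-1}$, $t\mapsto\lambda(t)c$. Since $\calY_{k-1}$ is projective, $f$ extends to a morphism $f:\C\to\calY_{k-1}$ by the valuative criterion, or concretely via the weight decomposition in $\CP^N$: the limit picks out the lowest-weight coordinates. The limit $c_0=f(0)$ is $\lambda$-fixed, hence $G$-fixed by Step 2. By Proposition~\ref{prop:fixed-clusters}, $c_0$ corresponds to a toric ordered cluster $\calC_0$, and $(\calY_k)_{c_0}\simeq\Bl_{\calC_0}Y$ as a $G$-toric manifold. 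I would set $\calX:=\C\times_{\calY_{k-1}}\calY_k$ with projection $\pi$. As a base change of a smooth projective morphism, $\pi$ is smooth and projective, so $\calX$ is smooth because $\C$ is smooth. For $t\neq0$, $G$-equivariance shows that $\lambda(t)$ gives an isomorphism $(\calY_k)_c\simeq(\calY_k)_{\lambda(t)c}$, so $X_t\simeq\Bl_{\calC}Y$. Finally, $X_0\simeq\Bl_{\calC_0}Y$, which is toric by Lemma~\ref{lem:invariant-subvariety-blowup-toric} applied inductively, since each centre is a fixed point.

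\textbf{Main obstacle.} The delicate step is Step 2, the genericity of $\lambda$. One must ensure that the limit point is fixed by all of $G$ and not merely by $\lambda(\C^*)$. The cleanest route is the linearization argument above, which needs an ample $G$-linearized line bundle on $\calY_{k-1}$. This exists because $\calY_{k-1}$ is a smooth projective variety with a torus action, so a suitable power of any ample bundle is linearizable. Once that is in place, the remaining steps are formal.
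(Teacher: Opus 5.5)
Your proof is correct. It follows the paper's overall architecture: Kleiman's family $\pi_k:\calY_k\to\calY_{k-1}$, a generic cocharacter, the limit $c_0$ of $\lambda(t)c$, and base change along the extended orbit. The difference is in how you prove the genericity step.

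\textbf{How the paper does it.} The paper never embeds $\calY_{k-1}$. It observes that only finitely many toric manifolds $\frakM_{\le k}$ arise from $Y$ by at most $k$ blow-ups at fixed points. It then chooses $\lambda$ to separate the weights of the orbit-closure models $X_A$ of all of them (Lemma~\ref{lem:generic-one-parameter}). Finally it climbs the tower by induction (Lemma~\ref{lem:lambda-fixed-calY}). If $c\in\calY_{r-1}^\lambda$, then $c'=\pi_{r-1}(c)$ is $G$-fixed by the inductive hypothesis. The $G$-equivariant identification $\Phi_{c'}$ of Proposition~\ref{prop:fixed-clusters} then turns $c$ into a $\lambda$-fixed, hence $G$-fixed, point of a member of $\frakM_{\le k}$.

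\textbf{How you do it.} You embed the (generally non-toric) parameter space $\calY_{k-1}$ itself $G$-equivariantly into some $\CP^N$ and separate its weights directly. Your argument is correct, including when weights repeat: a $\lambda$-fixed point of $\CP^N$ lies in a single $G$-weight space, so it is $G$-fixed.

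\textbf{What each route buys.} Your route is shorter. It needs the fixed-point equality only at the single level $k-1$, which is all the theorem uses, and it works for any projective $G$-variety. The cost is an external input: a $G$-linearized very ample power of an ample bundle on $\calY_{k-1}$ (Sumihiro, or Mumford/Knop--Kraft--Luna--Vust for normal varieties). It also tacitly needs the lifted action on $\calY_{k-1}$ to be algebraic. Lemma~\ref{lem:G-action-lifts} obtains the lifted action only via the universal property of blow-ups; algebraicity can be justified by flat base change of $\Bl_{\Delta_r}$ along the action morphism. The paper's route is self-contained, relying only on the polytope description of smooth projective toric varieties, at the price of the finiteness of $\frakM_{\le k}$ and the induction. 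The paper itself remarks that Proposition~\ref{prop:generic-one-parameter-calY} could instead be deduced from Sumihiro's theorem, which is exactly what you do.

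\textbf{Two smaller remarks.} Your identification of $X_t$ for $t\neq0$, via $\lambda(t):(\calY_k)_c\xrightarrow{\sim}(\calY_k)_{\lambda(t)c}$ from equivariance of $\pi_k$, is slightly more direct than the paper's detour through the transported cluster $\lambda(t)\calC$. The ``finitely many orbit types'' heuristic at the start of your Step~2 is loosely phrased, but the linearization argument after it carries the step on its own.
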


\subsubsection{A generic one-parameter subgroup over the universal family}\label{subsubsec:generic-one-parameter}

\begin{definition}
For the torus \(G=(\C^*)^m\), we denote by
\[
  X_*(G)=\operatorname{Hom}_{\mathrm{alg}}(\C^*,G)
\]
the cocharacter lattice of \(G\).  By identifying \(X_*(G)\simeq\Z^m\), a
cocharacter \(\lambda=(b_1,\ldots,b_m)\in X_*(G)\) is the one-parameter
subgroup
\[
  \lambda:\C^*\longrightarrow G,\qquad
  \lambda(t)=(t^{b_1},\ldots,t^{b_m}).
\]
\end{definition}

We need the following statement. It can be deduced from
Sumihiro's equivariant embedding theorem for torus actions
\cite{Sumihiro74}. In our situation, we give a self-contained proof using
only the projective orbit-closure description of projective toric manifolds
from Section~\ref{subsec:toric-algebraic-side}.

\begin{proposition}\label{prop:generic-one-parameter-calY}
Let \(Y\) be a smooth projective toric manifold with torus action by
\(G=(\C^*)^m\), and fix an integer \(k\ge0\). There exists a cocharacter
\(\lambda\in X_*(G)\) such that
\begin{equation}\label{eq:generic-one-parameter-calY-fixed}
  \calY_{r-1}^\lambda=\calY_{r-1}^G,\quad \forall\,0\le r\le k.
\end{equation}
\end{proposition}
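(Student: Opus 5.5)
We need a cocharacter $\lambda$ whose fixed points on each $\calY_{r-1}$, $0\le r\le k$, are exactly the $G$-fixed points. The plan is to reduce to a linear statement on a projective space with a diagonal torus action. There, a cocharacter $\lambda$ with $\langle\chi,\lambda\rangle\neq\langle\chi',\lambda\rangle$ for all distinct weights $\chi\neq\chi'$ has the same fixed locus as $G$. Since only finitely many weights occur, we avoid a finite union of hyperplanes in $X_*(G)\otimes\R$ and pick a lattice point off all of them. This works uniformly for all $r\le k$.

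\textbf{Step 1: a $G$-equivariant projective embedding of each $\calY_r$ with a linearized diagonal action.} For $Y$ itself we use Theorem~\ref{thm:acl-projective-orbit-closure}: $Y\simeq X_A\subset\CP^{\ell-1}$ equivariantly, with $G$ acting diagonally through the characters in $A$. We then proceed inductively along Kleiman's construction.

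The fibre product $\calZ_r=\calY_{r-1}\times_{\calY_{r-2}}\calY_{r-1}$ is a closed $G$-invariant subvariety of $\calY_{r-1}\times\calY_{r-1}$. Via the Segre embedding it sits in a projective space with diagonal action, whose weights are sums of weights.

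The blow-up $\calY_r=\Bl_{\Delta_r}\calZ_r$ is a closed subvariety of $\CP(\mathcal I_{\Delta_r})$, i.e. of $\calZ_r\times\CP^{N}$. Here $\CP^N=\CP(V^*)$, where $V$ is a finite-dimensional $G$-invariant space of sections generating $\mathcal I_{\Delta_r}\otimes\mathcal O(m)$ for $m\gg0$. This $V$ can be taken $G$-stable, since the ideal and the line bundle are $G$-linearized. $V$ then decomposes into weight spaces because $G$ is a torus. A Segre embedding of the product again gives a diagonal linear action.

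In each case the embedding is $G$-equivariant, so it carries $\calY_r^\lambda$ into the $\lambda$-fixed locus of the ambient projective space, and likewise for $G$.

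\textbf{Step 2: the linear genericity statement.} Suppose $G$ acts on $\CP^{M}$ by $w\cdot[z_j]=[w^{\chi_j}z_j]$. A point $[z]$ is $G$-fixed iff all $\chi_j$ with $z_j\neq0$ coincide. It is $\lambda$-fixed iff all $\langle\chi_j,\lambda\rangle$ with $z_j\neq0$ coincide.

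Choose $\lambda\in X_*(G)$ outside the finitely many hyperplanes $\{\langle\chi_i-\chi_j,\cdot\rangle=0\}$ with $\chi_i\neq\chi_j$, taken over all the ambient spaces for $r=0,\dots,k-1$. Such a $\lambda$ exists because a lattice is not covered by finitely many proper hyperplanes. For this $\lambda$, the $\lambda$-fixed points of each ambient space are $G$-fixed. Since $\calY_{r-1}^G\subset\calY_{r-1}^\lambda$ trivially, intersecting with $\calY_{r-1}$ gives the equality \eqref{eq:generic-one-parameter-calY-fixed}. The case $r=0$ (a point) is trivial.

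\textbf{Main obstacle.} The delicate part is Step 1 for the blow-up: producing a $G$-equivariant projective embedding of $\Bl_{\Delta_r}\calZ_r$ with a linear diagonal action without invoking Sumihiro in full. I would first try the concrete route of taking $V$ to be the span of the $G$-translates of a finite generating set of sections of $\mathcal I_{\Delta_r}(m)$. This span is finite-dimensional because the action on $H^0(\calZ_r,\mathcal O(m))$ is algebraic, hence locally finite, and the $G$-invariance of $\Delta_r$ makes the ideal sheaf $G$-stable.

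Alternatively, one can avoid embeddings of the blow-up entirely. The fixed-point question is local over fixed points of the base. A $\lambda$-fixed point of $\calY_r$ maps to a $\lambda$-fixed point of $\calZ_r$, which is $G$-fixed by induction, provided $\lambda$ is generic for $\calY_{r-1}\times\calY_{r-1}$. Over a $G$-fixed point of $\Delta_r$, the exceptional fibre is $\CP(N_{\Delta_r})$ with a linear $G$-action on a finite-dimensional normal space, and Step 2 applies to it directly. Over points off $\Delta_r$, the blow-up is an isomorphism. This fallback gives a clean induction and is what I would write if the embedding route gets heavy.
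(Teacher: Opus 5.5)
Your proposal is correct, but it takes a genuinely different route from the paper.

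\textbf{The paper's route.} The paper never embeds the total spaces $\calY_r$. In Lemma~\ref{lem:generic-one-parameter} it shows that the set $\frakM_{\le k}$ of toric manifolds obtained from $Y$ by at most $k$ blow-ups at $G$-fixed points is finite. It realizes each member as an orbit closure $X_A$ with distinct weights and picks $\lambda$ off the hyperplanes $\langle\chi^{(i)}-\chi^{(j)},\cdot\rangle=0$. Then in Lemma~\ref{lem:lambda-fixed-calY} it inducts on $r$. A $\lambda$-fixed $c\in\calY_{r-1}$ lies over $c'=\pi_{r-1}(c)$, which is $G$-fixed by induction. The $G$-equivariant fibre isomorphism $\Phi_{c'}$ of Proposition~\ref{prop:fixed-clusters} then sends $c$ to a $\lambda$-fixed, hence $G$-fixed, point of $\Bl_{\calC'}Y\in\frakM_{\le k}$.

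\textbf{Your main route.} You instead linearize the action on the whole tower, after which everything is linear algebra. This is essentially the Sumihiro-type deduction the paper mentions but deliberately avoids, done by hand. Because you allow repeated weights, you do not need isolated fixed points, finiteness of $\frakM_{\le k}$, the toric structure of the fibres, or the fixed-cluster correspondence. So your argument works for any projective $G$-variety with a linearized embedding. The price is the equivariant Proj-embedding of each $\Bl_{\Delta_r}\calZ_r$. This is unproblematic: for $s\gg0$ take $V=H^0(\calZ_r,\mathcal I_{\Delta_r}(s))$, a finite-dimensional rational $G$-module. The paper, by contrast, uses only the orbit-closure model of smooth projective toric varieties. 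It also reuses Proposition~\ref{prop:fixed-clusters}, which it needs anyway for the degeneration.

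\textbf{Your fallback.} This is closer in spirit to the paper, since it analyses fibres over $G$-fixed points. It uses the weights of the normal spaces $N_{\Delta_r,z}$ at $G$-fixed $z$ rather than the embedding weights of the toric fibres. To make it rigorous, fix these finitely many weight sets before choosing $\lambda$; they are locally constant on the fixed locus, which has finitely many components. Also, genericity for $\calY_{r-1}\times\calY_{r-1}$ is automatic from the induction hypothesis, since the action on $\calZ_r$ is diagonal.
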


It suffices to prove the following two lemmas.

\begin{lemma}\label{lem:generic-one-parameter}
Let \(\frakM_{\le k}\) be the collection of smooth projective
toric manifolds obtained from \(Y\) by at most \(k\) successive blow-ups
at \(G\)-fixed points. Then the collection
\(\frakM_{\le k}\) is finite.  Moreover, there exists a
cocharacter
\(
  \lambda\in X_*(G)
\)
such that
\[
  X^\lambda=X^G,\quad \forall X\in\frakM_{\le k}.
\]
\end{lemma}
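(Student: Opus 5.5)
Finiteness goes by induction on $k$. A toric blow-up of $X\in\frakM_{\le k-1}$ at a $G$-fixed point is determined by the choice of that fixed point. By Theorem~\ref{thm:acl-projective-orbit-closure}, a smooth projective toric manifold $X$ is equivariantly an orbit closure $X_A$, with $P_A$ a Delzant polytope. The $G$-fixed points of $X_A$ are exactly the torus-fixed points of the ambient $\CP^{\ell-1}$ that lie in $X_A$, and these correspond to the vertices of $P_A$. Hence $X^G$ is finite. By Lemma~\ref{lem:invariant-subvariety-blowup-toric}, each blow-up $\Bl_pX$ with $p\in X^G$ is again a smooth projective toric manifold with the same torus $G$. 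Each $X$ therefore has only finitely many successors, and $\frakM_{\le k}$, being the set of nodes of a finitely branching tree of depth $k$, is finite.

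For the cocharacter, it suffices, since $\frakM_{\le k}$ is finite, to show that for each single $X$ the set of \emph{bad} cocharacters $\lambda$ with $X^\lambda\ne X^G$ is contained in a finite union of proper sublattices (hyperplanes) of $X_*(G)\simeq\Z^m$. A finite union of proper hyperplanes cannot cover $\Z^m$, so a common good $\lambda$ then exists.

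To identify the bad $\lambda$, write $X=X_A\subset\CP^{\ell-1}$ with $G$ acting diagonally by the characters $\lambda^{(1)},\ldots,\lambda^{(\ell)}$. The cocharacter $\lambda$ acts on the coordinate $z_j$ by $t^{\langle \lambda^{(j)},\lambda\rangle}$. A point $[z]$ is $\lambda$-fixed exactly when $\langle\lambda^{(j)},\lambda\rangle$ takes the same value on all $j$ in its support $S=\{j:z_j\ne0\}$. It is $G$-fixed exactly when the characters $\lambda^{(j)}$ with $j\in S$ all coincide. If $\lambda$ lies off every hyperplane
\[
\{\lambda:\langle\lambda^{(i)}-\lambda^{(j)},\lambda\rangle=0\},\qquad \lambda^{(i)}\ne\lambda^{(j)},
\]
of which there are finitely many, then distinct characters take distinct values on $\lambda$. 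For such $\lambda$, every $\lambda$-fixed point has all its support on a single character and is therefore $G$-fixed. Thus $X^\lambda=X^G$ off these hyperplanes, because the embedding is $G$-equivariant and fixed points of $X$ are fixed points of the ambient space lying on $X$.

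The main obstacle is one of bookkeeping. The action of $G$ on $X_A$ must be given by a linear diagonal action whose characters are honest characters of $G$ itself. The effectiveness normalization and the identification $X\simeq X_A$ in Theorem~\ref{thm:acl-projective-orbit-closure} might involve a reparametrization of the torus, i.e.\ a finite-index change of lattices. Such a change only rescales the lattices and maps hyperplanes to hyperplanes, so the argument survives, but this must be checked for each blown-up $X$ with its lifted $G$-action. The characters used for different $X\in\frakM_{\le k}$ may differ; this is harmless, since we take the union of all their hyperplanes and finiteness of $\frakM_{\le k}$ keeps that union finite.
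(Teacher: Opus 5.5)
Your proposal is correct and follows the paper's proof essentially step by step. Finiteness comes from the finiteness of $X^G$ in the orbit-closure model $X_A$ together with the finitely branching tree of toric blow-up sequences; then $\lambda$ is chosen off the finite union of proper hyperplanes $\langle\chi^{(i)}-\chi^{(j)},\lambda\rangle=0$, which forces every $\lambda$-fixed point to be a coordinate point and hence $G$-fixed. Your remark about a possible reparametrization of the torus is a point the paper passes over silently, and it is harmless, as you say, since a lattice automorphism sends proper hyperplanes to proper hyperplanes.
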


\begin{lemma}\label{lem:lambda-fixed-calY}
Let \(\lambda\in X_*(G)\) be a cocharacter such that
\[
  X^\lambda=X^G,\quad \forall X\in\frakM_{\le k}.
\]
Then
\[
  \calY_{r-1}^\lambda=\calY_{r-1}^G,\quad \forall\,0\le r\le k.
\]
\end{lemma}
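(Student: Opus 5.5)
We argue by induction on \(r\), using the fibre structure \(\pi_{r-1}:\calY_{r-1}\to\calY_{r-2}\) and the \(G\)-equivariant fibre identifications of Proposition~\ref{prop:fixed-clusters}. The inclusion \(\calY_{r-1}^G\subset\calY_{r-1}^\lambda\) is trivial, since \(\lambda(\C^*)\subset G\). So only \(\calY_{r-1}^\lambda\subset\calY_{r-1}^G\) needs proof. For \(r=0\), \(\calY_{-1}\) is a point and there is nothing to show. For \(r=1\), \(\calY_0=Y\in\frakM_{\le k}\), so \(Y^\lambda=Y^G\) holds by hypothesis.

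For the inductive step, assume \(\calY_{r-2}^\lambda=\calY_{r-2}^G\) for some \(2\le r\le k\), and let \(c\in\calY_{r-1}^\lambda\). By Lemma~\ref{lem:G-action-lifts}, \(\pi_{r-1}\) is \(G\)-equivariant, hence \(\lambda\)-equivariant. It follows that \(b:=\pi_{r-1}(c)\) is \(\lambda\)-fixed, and by the induction hypothesis \(b\in\calY_{r-2}^G\).

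Since \(b\) is \(G\)-fixed, the fibre \((\calY_{r-1})_b\) is \(G\)-invariant. By Proposition~\ref{prop:fixed-clusters} applied with length \(r-1\), \(b\) corresponds to a toric ordered cluster \(\calC'\) of length \(r-1\). The same proposition gives a \(G\)-equivariant isomorphism
\[
\Phi_b:(\calY_{r-1})_b\xrightarrow{\ \simeq\ }\Bl_{\calC'}Y .
\]
The manifold \(\Bl_{\calC'}Y\) is obtained from \(Y\) by \(r-1\le k\) successive blow-ups at \(G\)-fixed points, so it lies in \(\frakM_{\le k}\). Now \(c\) lies in the fibre over \(b\) and is \(\lambda\)-fixed, so \(\Phi_b(c)\) is a \(\lambda\)-fixed point of \(\Bl_{\calC'}Y\). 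By hypothesis, \(\Phi_b(c)\) is therefore \(G\)-fixed. Since \(\Phi_b\) is \(G\)-equivariant, \(c\) is \(G\)-fixed, which completes the induction.

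The main point needing care is the compatibility of the \(\lambda\)-action on the fibre with the \(G\)-action on \(\Bl_{\calC'}Y\). For this, \(G\) must preserve the fibre, which is why we first show that \(b\) is \(G\)-fixed and not merely \(\lambda\)-fixed. We must also make sure the fibre identification is equivariant for the full group. Both facts are supplied by the induction hypothesis and Proposition~\ref{prop:fixed-clusters}. Finally, the index range \(r-1\le k\) keeps the relevant blow-ups inside \(\frakM_{\le k}\).
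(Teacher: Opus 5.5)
Your proposal is correct and follows the paper's proof essentially step by step. Both arguments run an induction on \(r\), use the \(\lambda\)-equivariance of \(\pi_{r-1}\) and the induction hypothesis to make the base point \(G\)-fixed, apply the \(G\)-equivariant fibre identification \(\Phi_b\) from Proposition~\ref{prop:fixed-clusters}, and invoke the hypothesis on \(\Bl_{\calC'}Y\in\frakM_{\le k}\). The only cosmetic differences are these: you conclude that \(c\) is \(G\)-fixed directly from the equivariance of \(\Phi_b\), while the paper passes through ``\((\calC',p_r)\) is toric'' and the fixed-point correspondence; and you spell out that the case \(r=1\) uses the hypothesis with \(X=Y\).
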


\begin{proof}[Proof of Lemma \ref{lem:generic-one-parameter}]
Let \(X\) be a smooth projective \(G\)-toric manifold.  By Theorem
\ref{thm:acl-projective-orbit-closure} and the notation of Example
\ref{ex:projective-orbit-closure-toric}, we may write
\[
  X\simeq X_A\subset \mathbb P^{N-1},
  \qquad
  A=\{\chi^{(1)},\ldots,\chi^{(N)}\}\subset X^*(G),
\]
where the \(\chi^{(i)}\) are distinct weights.  In this model every
\(G\)-fixed point has at most one nonzero homogeneous coordinate.  Hence
\(X^G\) is finite.  There are only finitely many possible toric blow-up
sequences of length at most \(k\): after each blow-up at a \(G\)-fixed point,
the resulting variety is again \(G\)-toric and has only finitely many
\(G\)-fixed points.  Hence \(\frakM_{\le k}\) is finite.

For each \(X\in\frakM_{\le k}\), choose such an equivariant orbit-closure
realization
\(
  X\simeq X_{A_X}\subset \mathbb P^{N_X-1},
\)
where
\[
  A_X=\mathbb Z^m\cap P_X
  =
  \{\chi_X^{(1)},\ldots,\chi_X^{(N_X)}\}\subset X^*(G).
\]
Let \(H\subset X_*(G)_\mathbb R\) be the finite union of hyperplanes given by
\[
  H=
  \bigcup_{X\in\frakM_{\le k}}
  \bigcup_{1\le i<j\le N_X}
  \left\{
    \lambda\in X_*(G)_\mathbb R
    \ \middle|\
    \bigl\langle
      \chi_X^{(i)}-\chi_X^{(j)},\lambda
    \bigr\rangle=0
  \right\}.
\]
Since the lattice \(X_*(G)\) is not contained in a finite union of proper hyperplanes, we may choose
\[
  \lambda\in X_*(G)\setminus H.
\]
Then, for every \(X\in\frakM_{\le k}\) and every
\(i\ne j\), one has
\[
  \bigl\langle \chi_X^{(i)},\lambda\bigr\rangle
  \ne
  \bigl\langle \chi_X^{(j)},\lambda\bigr\rangle .
\]

Let us show that \(X^\lambda=X^G\) for every \(X\in\frakM_{\le k}\).  The
inclusion \(X^G\subset X^\lambda\) is clear.
Conversely, take \(x\in X^\lambda\), and write it in the chosen
orbit-closure realization as
\[
  x=[z_1:\cdots:z_{N_X}]\in X_{A_X}\subset\mathbb P^{N_X-1}.
\]
The induced \(\C^*\)-action is
\[
  t\cdot [z_1:\cdots:z_{N_X}]
  =
  [
    t^{\langle\chi_X^{(1)},\lambda\rangle}z_1:
    \cdots:
    t^{\langle\chi_X^{(N_X)},\lambda\rangle}z_{N_X}
  ].
\]
If \(x\) is fixed by \(\lambda(\C^*)\), then all nonzero coordinates of
\(x\) must have the same exponent
\(\langle\chi_X^{(i)},\lambda\rangle\).  By the choice of \(\lambda\), this
is possible only when \(x\) has at most one nonzero coordinate.  Thus \(x\)
is a coordinate point of \(\mathbb P^{N_X-1}\), and hence it is fixed by the
whole torus \(G\).  Therefore \(X^\lambda\subset X^G\), and the equality
follows.
\end{proof}

\begin{proof}[Proof of Lemma \ref{lem:lambda-fixed-calY}]
Since \(\lambda(\C^*)\subset G\), the inclusion
\(\calY_{r-1}^G\subset \calY_{r-1}^\lambda\) is clear.  We prove the
reverse inclusion by induction on \(r\).  The cases \(r=0\) and \(r=1\) are trivial. Assume the assertion is known for \(r-1\) for some \(r\geq 2\). Let
\[
  c\in\calY_{r-1}^\lambda\ \text{ and }\
  c'=\pi_{r-1}(c)\in\calY_{r-2}.
\]
Since \(\pi_{r-1}\) is \(\lambda\)-equivariant, we have
\(c'\in\calY_{r-2}^\lambda\) and so \(c'\in\calY_{r-2}^G\) by the induction hypothesis. According to the correspondence of Proposition
\ref{prop:kleiman-iteration}, \(c'\) determines the preceding
ordered cluster \[\calC'=(p_1,\cdots,p_{r-1})\] of length \(r-1\) on \(Y\), and \(c\) determines \(\calC=(\calC',p_r)\) by putting \(p_r=\Phi_{c'}(c)\).  By Proposition
\ref{prop:fixed-clusters}, the ordered cluster $\calC'$ is toric and thus $\Bl_{\calC'}Y\in \frakM_{\le k}$. 

Note that \(c\) lies in this fibre and is fixed by \(\lambda\). The $G$-equivariant isomorphism given by Proposition \ref{prop:fixed-clusters}
\[
  \Phi_{c'}:(\calY_{r-1})_{c'}\rightarrow \Bl_{\calC'}Y
\]
shows that the point $p_r=\Phi_{c'}(c)$ of \(\Bl_{\calC'}Y\) is also fixed by $\lambda$. According to the assumption
\(
  (\Bl_{\calC'}Y)^\lambda=(\Bl_{\calC'}Y)^G,
\)
 $p_r$ is fixed by $G$ and so \(\calC=(\calC',p_r)\) is a toric ordered
cluster of length \(r\).  By Proposition \ref{prop:fixed-clusters},
\(
  c\in\calY_{r-1}^G.
\)
Therefore \(\calY_{r-1}^\lambda\subset\calY_{r-1}^G\), and the induction is
complete.
\end{proof}

The proposition follows immediately from Lemmas \ref{lem:generic-one-parameter}
and \ref{lem:lambda-fixed-calY}.

\subsubsection{Completion of the proof}

Now let us complete the proof of Theorem \ref{thm:toric-special-fibre}. Using the notations of Proposition \ref{prop:kleiman-iteration}, the cluster $\calC$ corresponds to a
point \(c\in\calY_{k-1}\) with \((\calY_k)_c\simeq \Bl_{\calC}Y\).  Choose
\(\lambda\in X_*(G)\) as in Proposition \ref{prop:generic-one-parameter-calY} and consider the orbit map
\[
  \C^*\to\calY_{k-1},\quad t\mapsto \lambda(t)c.
\]
Since \(\calY_{k-1}\) is projective, it is proper; hence the algebraic orbit map
\(t\mapsto\lambda(t)c\) extends across \(0\); analytically, this gives a
holomorphic map
\(
  \gamma:\C\longrightarrow \calY_{k-1}
\) with 
\[
\gamma(t)=\lambda(t)c,\quad \forall t\in \C^*.
\]
This is also clear after
choosing a projective embedding, since its homogeneous coordinates are
Laurent polynomials in \(t\), up to multiplication by a common power of \(t\).

Let
\(
  c_0=\gamma(0),
\)
then for any $s\in\C^*$,
\[
  \lambda(s)c_0
  =\lambda(s)\lim_{t\to0}\gamma(t)=
  \lambda(s)\lim_{t\to0}\lambda(t)c
  =
  \lim_{t\to0}\lambda(st)c
  =
  c_0.
\]
Thus \(c_0\in\calY_{k-1}^\lambda\).  By \eqref{eq:generic-one-parameter-calY-fixed},
\(c_0\in\calY_{k-1}^G\), and by Proposition \ref{prop:fixed-clusters} it is a toric ordered
cluster of length $k$.
Consider the fibre product
\[
  \calX
  =
  \calY_k\times_{\calY_{k-1}}\C
  =
  \{(y,t)\in \calY_k\times\C \mid \pi_k(y)=\gamma(t)\},
\]
and let
\[
  \pi:\calX\longrightarrow \C
\]
be the projection to the second factor.  Since
\(\pi_k:\calY_k\to\calY_{k-1}\) is smooth and projective by Proposition
\ref{prop:kleiman-iteration}, the base change
\(\pi:\calX\to\C\) is again a smooth projective family.

Let \(\calC_0\) be the toric ordered cluster corresponding to \(c_0\). It remains to prove the fibre identifications in \eqref{eq:toric-deformation-fibres}. For \(t\neq0\), \(\gamma(t)=\lambda(t)c\).  Since the correspondence in
Proposition \ref{prop:kleiman-iteration} is compatible with the \(G\)-action,
the point \(\lambda(t)c\) corresponds to the transported cluster
\(\lambda(t)\calC\).  Therefore
\[
  X_t=(\calY_k)_{\lambda(t)c}
      \simeq \Bl_{\lambda(t)\calC}Y
      \simeq \Bl_{\calC}Y,
\]
where the last isomorphism follows from
\(\lambda(t)\in G\subset\Aut(Y)\) and functoriality of blow-ups.
For \(t=0\), Proposition \ref{prop:kleiman-iteration} gives
\[
X_0=(\calY_k)_{c_0}\simeq \Bl_{\calC_0}Y.
\]
Since \(c_0\in (\calY_{k-1})^G\), Proposition \ref{prop:fixed-clusters}
shows that \(\calC_0\) is a toric ordered cluster. Hence \(X_0\) is a smooth
projective toric manifold by Lemma \ref{lem:invariant-subvariety-blowup-toric}. This proves
Theorem \ref{thm:toric-special-fibre}.

\subsection{Proof of main results}
We end the paper by proving Theorem \ref{theorem-blowup-HSC}, Theorem \ref{theorem-rational-HSC} and Corollary \ref{cor:nonnegative-hsc-surfaces} building on the preceding discussion.

\subsubsection{The existence of K\"ahler metrics with $\HSC>0$ on rational surfaces}
Recall the following statements:
\begin{itemize}
  \item $\CP^2$ and all Hirzebruch surfaces $\mathbb F_{k}$ are projective toric surfaces (see Example \ref{ex:Hirzebruch});
  \item according to the classical classification of compact complex surfaces (cf. the textbook \cite[Chapter III, Theorem 4.5; Chapter VI, Proposition 3.3]{BHPV04}), every rational surface is obtained from $\CP^2$ or from a Hirzebruch surface $\mathbb F_k$ by a finite sequence of blow-ups at points.
\end{itemize}
Thus, to prove Theorem \ref{theorem-rational-HSC}, it suffices to prove Theorem \ref{theorem-blowup-HSC}.

\begin{proof}[Proof of Theorem \ref{theorem-blowup-HSC}]
  We may assume that $X=\Bl_\calC Y$ for some projective toric manifold $Y$ and some ordered cluster of length $k$. Choose a smooth projective family $\pi:\calX\rightarrow \C$ given in Theorem \ref{thm:toric-special-fibre}; then we have
\begin{equation}\label{eq:main-proof-fibre-identifications}
  X_t\simeq \Bl_{\calC}Y \quad (t\neq 0),
  \qquad
  X_0\simeq \Bl_{\calC_0}Y
\end{equation}
for some toric ordered cluster $\calC_0$ of length $k$. In particular \(X_0\) is a projective toric manifold by Lemma \ref{lem:invariant-subvariety-blowup-toric}. Recall that  $X_0$ is biholomorphic to the underlying complex manifold of the canonical toric K\"ahler manifold $(M_\Delta,J_\Delta,\w_\Delta)$ obtained from Delzant's construction (cf. Proposition \ref{prop:alg-sym}). Theorem \ref{thm:guillemin-positive} implies that $\w_\Delta$ has $\HSC>0$ and thus gives a K\"ahler metric $\w$ with $\HSC>0$ on $X_0$. According to the classical Kodaira-Spencer local stability of K\"ahler structures \cite[Theorem 15]{KodairaSpencer60}: for a sufficiently small disk $\mathbb D$, the fibre $X_t$ over any $t\in\mathbb D$ admits a K\"ahler metric $\w_t$, which depends smoothly on $t$ and coincides for $t=0$ with $\w$. Since the condition \(\HSC>0\) is open in the \(C^2\)-topology of K\"ahler metrics, after shrinking \(\mathbb D\) if necessary, one has
\[\HSC_{\w_t}>0,\quad \forall t\in\mathbb D,\]
and we conclude that $\Bl_{\calC}Y$ admits a K\"ahler metric with $\HSC>0$ by \eqref{eq:main-proof-fibre-identifications}. 
\end{proof}

\subsubsection{Classification of surfaces with $\HSC\geq0$}\label{subse:classification-surface}
From the viewpoint of algebraic geometry, it is an interesting problem to classify all varieties of dimension two or three with $\HSC\geq0$, which was asked in \cite[Problem 5.3]{Matsumura2022}. 

Combining Matsumura's structure theorem obtained in \cite{Matsumura2022,matsumura2025} for the detailed statement) and Theorem \ref{theorem-rational-HSC}, we obtain a complete classification of compact K\"ahler surfaces with $\HSC\geq 0$. The proof is standard, but we include it for the sake of completeness.

\begin{proof}[Proof of Corollary \ref{cor:nonnegative-hsc-surfaces}]
  
We first prove \((1)\Rightarrow(2)\).  Let $(X,g)$ be a compact K\"ahler surface with $\HSC\geq 0$. For simplicity, we adopt the notation of \cite[Theorem 1.1]{matsumura2025}. Since \(\dim X=2\), there are three cases by Matsumura's structure theorem.  If \(\dim F=2\), then \(Y\) is a
point and \(X=F\) is a rationally connected projective surface.  Hence \(X\) is
rational by the Enriques--Kodaira classification
(see e.g. \cite[Chapter~VI]{BHPV04}).  If \(\dim F=0\), then \(X=Y\), and \(X\) is a
finite \'etale quotient of a two-dimensional complex torus.

It remains to consider the case \(\dim F=1\).  Then \(F\simeq\CP^1\), while
\(Y\) is a one-dimensional finite \'etale quotient of a compact complex torus,
hence an elliptic curve \(E\).  Since the fibration is locally constant, it is
given by
\(
  X\simeq (\C\times\CP^1)/\Lambda,
\)
where \(\Lambda=\pi_1(E)\) acts by
\(
  \lambda\cdot(z,p)=(z+\lambda,\rho(\lambda)p)
\)
for a representation
\[
  \rho:\Lambda\longrightarrow \Aut(\CP^1)=\mathrm{PGL}_2(\C).
\]
The isometric splitting in Matsumura's theorem implies that the fibre
monodromy acts by holomorphic isometries of \((\CP^1,g_F)\).  Thus
\(
  \rho(\Lambda)\subset \Aut(\CP^1)\cap \operatorname{Isom}(\CP^1,g_F),
\)
which implies that the closure \(\overline{\rho(\Lambda)}\) is compact. By Cartan's theorem on maximal
compact subgroups of semisimple Lie groups \cite[Chapter VII, Section
2]{Knapp2002}, every compact subgroup of \(\mathrm{PGL}_2(\C)\) is conjugate
into \(\operatorname{PU}(2)\).  Hence, after conjugating inside
\(\mathrm{PGL}_2(\C)\), we may assume that
\(
  \rho(\Lambda)\subset \operatorname{PU}(2).
\)
Therefore \(X\) is a projectively unitary flat \(\CP^1\)-bundle over an
elliptic curve.

Conversely, assume that \(X\) is one of the three surfaces in \((2)\).  If
\(X\) is rational, then Theorem~\ref{theorem-blowup-HSC}
gives a K\"ahler metric on \(X\) with \(\HSC>0\). If \(X\) is a finite \'etale quotient of a two-dimensional complex torus, then
\(X\) admits a flat K\"ahler metric by the complex version of the classical Bieberbach theorem; see, for instance
\cite[Lemma~1.3]{Rogov2022}. Finally, suppose that \(X\) is a projectively unitary flat \(\CP^1\)-bundle
over an elliptic curve \(E=\C/\Lambda\).  Then
\(
  X\simeq(\C\times\CP^1)/\Lambda
\)
for some representation \(\rho:\Lambda\to\operatorname{PU}(2)\).  The product
K\"ahler metric
\(
  \widetilde\w
  =
  \w_{\mathrm{flat},\C}+\w_{\mathrm{FS}}
\)
on \(\C\times\CP^1\) is \(\Lambda\)-invariant, and hence descends to a
K\"ahler metric \(\w_X\) on \(X\).  Since the product metric has no mixed
curvature terms, for every nonzero vector
\(v=(v_{\C},v_{\CP^1})\) one has
\(
  \HSC_{\widetilde\w}(v)
  \geq 0.
\)
Thus \(\w_X\) has semi-positive holomorphic sectional curvature.  This proves
\((2)\Rightarrow(1)\), and the proof is complete.
\end{proof}

\end{document}